\documentclass[12pt, colorinlistoftodos]{amsart}
\usepackage{graphicx}
\usepackage{amsmath, enumerate}
\usepackage{amsfonts, amssymb, amsthm, xcolor, stmaryrd}
\usepackage{tikz, tikz-cd, caption, tabu, longtable, mathrsfs, xtab, centernot, mathtools}
\usepackage{dsfont}
\usepackage[textwidth=20mm]{todonotes}
\usepackage[]{hyperref}
\usepackage[style=alphabetic, backend=bibtex,doi=false,isbn=false,url=false,eprint=true, sorting=nyt, maxnames=99, backref=true]{biblatex}

\pdfpagewidth 8.5in
\pdfpageheight 11in

\setlength\textheight{7.7in}
\setlength\textwidth{6.5in}
\setlength\oddsidemargin{0in}
\setlength\evensidemargin{0in}
\makeatletter 
\@mparswitchfalse
\makeatother
\normalmarginpar

\numberwithin{equation}{section}
\theoremstyle{plain}

\newtheorem{thm}{Theorem}[section]
\newtheorem{lemma}[thm]{Lemma}
\newtheorem{prop}[thm]{Proposition}
\newtheorem{cor}[thm]{Corollary}
\newtheorem{remark}[thm]{Remark}

\theoremstyle{definition}

\theoremstyle{remark}
\newtheorem{rmk}[thm]{Remark}

\newcommand{\Hb}{\mathbb{H}}
\newcommand{\SL}{{\mathrm{SL}}}
\newcommand{\Zb}{\mathbb{Z}}
\newcommand{\Cb}{\mathbb{C}}
\newcommand{\Qb}{\mathbb{Q}}

\newcommand{\Nb}{\mathbb{N}}
\newcommand{\F}{\mathbb{F}}
\newcommand{\lp}{\left (}
\newcommand{\rp}{\right )}
\newcommand{\pf}{\mathfrak{p}}
\newcommand{\Gal}{{\mathrm{Gal}}}

\newcommand{\Rb}{\mathbb{R}}

\newcommand{\smat}[4]{\left(\begin{smallmatrix}
                 #1 & #2\\
                 #3 & #4
\end{smallmatrix}\right)}
\newcommand{\pmat}[4]{\begin{pmatrix}
                 #1 & #2\\
                 #3 & #4
\end{pmatrix}}

\newcommand{\Oc}{\mathcal{O}}
\newcommand{\df}{\mathfrak{d}}

\newcommand{\af}{\mathfrak{a}}
\newcommand{\Nm}{{\mathrm{Nm }}}

\newcommand{\ord}{{\mathrm{ord}}}

\newcommand{\lf}{\mathfrak{l}}

\newcommand{\Ec}{{\mathcal{E}}}

\newcommand{\Sc}{{\mathcal{S}}}

\newcommand{\Z}{{\mathbb{Z}}}
\newcommand{\tr}
{\mathrm{tr}}

\newcommand{\Wb}{\mathbb{W}}
\newcommand{\Ab}{\mathbb{A}}

\def\ol#1{\overline{#1}}

 \newcommand{\Hom}{\mathrm{Hom}}
 \newcommand{\End}{\mathrm{End}}
  \newcommand{\cha}{\mathrm{Char}}
  
  \newcommand{\cp}{c^{(\mathfrak{p})}}
    \newcommand{\cpp}{c^{(\mathfrak{p}')}}

\newcommand{\Diff}{\mathrm{Diff}}
\newcommand{\Ep}{E}
\newcommand{\RC}{\mathrm{RC}}

 \author[E.~Assing]{Edgar Assing}
 \address{
Mathematical Institute of the University of Bonn,
 Endenicher Allee 60,
 D--53115 Bonn,
 Germany
 }
 \email{assing@math.uni-bonn.de}
 \author[Y.~Li]{Yingkun Li}
 \address{
Max Planck Institute for Mathematics,
    Vivatsgasse 7, 
    D--53111     Bonn,
    Germany}
\email{yingkun@mpim-bonn.mpg.de}

 \author[T.~Wang]{Tian Wang} \address{
Department of Mathematics \& Statistics,
Concordia University,
Montreal, Quebec, Canada
 }
 \email{tian.wang@concordia.ca}
 
 \author[J.~Xia]{Jiacheng Xia} 
 \address{
Department of Mathematics,
The University of Hong Kong,
Pokfulam, Hong Kong
 }
 \email{philimathmuse@gmail.com}
 \subjclass[2010]{Primary 11F30, 11G15, 14K02; Secondary 11G05}
 \date{\today}

\begin{document}
\title{Isogenies of CM Elliptic Curves}

 \begin{abstract}
   Given two CM elliptic curves over a number field and a natural number $m$, 
   we establish a polynomial lower bound (in terms of $m$) for the number of rational primes $p$ such that the reductions of these elliptic curves modulo a prime above $p$ are $m$-isogenous. The proof relies on higher Green functions and theorems of Gross-Zagier and Gross-Kohnen-Zagier.
   A crucial observation is that the Fourier coefficients of incoherent Eisenstein series can be approximated by those of coherent Eisenstein series of increasing level.
   Another key ingredient is an explicit upper bound for the Petersson norm of an arbitrary elliptic modular form in terms of finitely many of its Fourier coefficients at the cusp infinity, which  is a result of independent interest.
\end{abstract}
\maketitle

\section{Introduction}\label{intro}
In the study of the arithmetic of elliptic curves, it is of great interest to understand isogenies of elliptic curves in analogy to understanding their torsion points. Let $E_1$ and $E_2$ be two elliptic curves defined over a number field $K$. Let $\pf$ be a prime ideal of the ring of integers $\Oc_K$ such that both $E_1$ and $E_2$ have good reductions at $\pf$. Let $E_{1, \pf}$ and $E_{2, \pf}$ denote these two reductions, respectively. If there is a cyclic $m$-isogeny between $E_1$ and $E_2$, then it is clear that there is also a cyclic $m$-isogeny between $E_{1, \pf}$ and $E_{2, \pf}$.  Conversely, one can ask for a fixed positive integer  $m$, if $E_{1, \pf}$ and $E_{2, \pf}$ are  $m$-isogenous for sufficiently many primes $\pf$,  does it imply that  $E_1$ and $E_2$ are also $m$-isogenous?   For a rational prime $\ell$, such a local-global principle for non-trivial $\ell$-torsion points of the Mordell-Weil group of an elliptic curve was investigated by Katz \cite{Ka1981}. Subsequently, the local-global principle for $\ell$-isogenies of elliptic curves has been extensively studied in works such as \cite{Anni2014, BaCr2014, Sutherland2012, Vo2020}.  In general, addressing this local-global principle is closely related to the broader question of understanding the image of the mod  $\ell$ Galois representation attached to elliptic curves. 

There is a significant distinction between cyclic isogenies in CM and non-CM cases. For instance, for any two non-CM elliptic curves over a number field, there is at most one number that can appear as degrees of cyclic isogenies between them. In contrast, there are infinitely many different degrees of cyclic isogenies between any two rationally isogenous CM elliptic curves. In terms of the local-global principle for cyclic $m$-isogenies, we want to understand how many primes are ``sufficiently many'' in the CM case, which leads us to the following natural question.

Suppose $E_1$ and $E_2$ are two CM elliptic curves over $K$ which are not $m$-isogenous. For a fixed positive integer $m$, how many prime ideals  in $\Oc_K$ are there such that the reductions $E_{1, \pf}$ and $E_{2, \pf}$ at $\pf$ become $m$-isogenous?  Note that, without fixing the degree $m$, similar questions have already been addressed in previous works. For instance, an upper bound of such primes with bounded norm  was established in \cite{CoHiWa2024, Wong2022} for non-isogenous   non-CM elliptic curves using analytic tools such as effective Chebotarev density theorem and effective joint Sato-Tate distribution. For non-isogenous  CM elliptic curves,   an asymptotic formula can be easily derived using Deuring's criterion  (see Remark \ref{rmk:upper-bound-pi}). For both CM and non-CM elliptic curves, Charles  \cite{Chrles2018} applied Arakelov intersection theory to prove the existence of infinitely many primes $\pf$ for which $E_{1, \pf}$ and $E_{2, \pf}$ are isogenous. Nonetheless, as far as we know, there is no such  results in the literature when we fix the degree of cyclic isogenies.   In this note,  we establish an polynomial lower bound, in terms of $m$, for the number of primes where the reductions of two CM elliptic curves become $m$-isogenous.

For $i = 1, 2$, let $E_i$ be fixed elliptic curves with CM by orders $\Oc_{D_i} := \Zb[(D_i + \sqrt{D_i})/2]$ in imaginary quadratic fields $K_i = \Qb(\sqrt{D_i})$.
They are defined over the ring class fields $H_{D_i}$ for $\Oc_{D_i}$.
The composite $H := H_{D_1} H_{D_2}$ is an abelian extension of $K := K_1K_2$, which is a biquadratic field and a quadratic CM extension of the real quadratic field $F = \Qb(\sqrt{D}), D := D_1D_2$ when $D$ is not a rational square.

Let $p$ be a rational prime inert in $K_i$.
Then it splits completely in $H_{D_i}$, hence also in $H$.
For a prime $\pf$ in $H$ above such $p$, we will denote $E_{i, \pf}$ the supersingular elliptic curve defined over $\Oc_H/\pf \cong \mathbb{F}_{p^2}$ obtained from $E_i$ by reduction modulo $\pf$.\footnote{It is an exercise to show that this rational prime $p$ splits into  two primes of the same inertia degree in $K$.  Let $\pf_
K$ be any  prime of $K$ above $p$. Then $\Nm(\pf_K)=p^2$ and it is obvious that $\pf_K$ splits completely in $H_iK$ for $i = 1, 2$. 
  As $H = H_1H_2 = (H_1K)(H_2K)$, $\pf_K$ also splits completely in $H$. So for a prime $\pf$ in $H$ lying above $p$ (hence above $\pf_K$), we have $\Nm(\pf)=p^2$.} 
Then they are isogenous over $\ol{\F}_{p}$.
For $m \in \Nb$, denote $E_{1, \pf}\sim_m E_{2, \pf}$ if there is a cyclic isogeny of degree $m$ between $E_{1, \pf}$ and $E_{2, \pf}$.

In this note, we will investigate the number of $p$ such that $E_{1, \pf}\sim_m E_{2, \pf}$ as a function of $m$, i.e.,  the size of  the set
\begin{equation}
  \label{eq:pim}
  \pi(m) := \pi_{E_1, E_2}(m) := \{ p \text{ prime}\colon p\nmid m,\,  E_{1, \pf} \sim_m E_{2, \pf}  \text{ for a prime }\pf \text{ over } p  \}. 
\end{equation}
This is a finite set with elements $p \in \pi(m)$ bounded by $D m^2/4$ (see Remark \ref{rmk:r-bound}). 
Furthermore, only primes $p$ that are non-split in $K_i$ could appear in $\pi(m)$, otherwise, $E_{i, \pf}$ would be ordinary and have different endomorphism rings. 
In \cite{Li21}, one of us showed that $\pi(m)$ is non-empty for any $m$.  
This is improved in the following theorem.

\begin{thm}
  \label{thm:main}
  Suppose $D_1, D_2$ are fundamental discriminants and co-prime. 
Then
  \begin{equation}
    \label{eq:bound}
        \#\pi(m) \gg_{E_1, E_2, \epsilon} m^{1/5 - \epsilon} 
  \end{equation}
  for any $\epsilon > 0$. 
\end{thm}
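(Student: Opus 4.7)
The plan is to realise a weighted version of $\#\pi(m)$ as the non-archimedean part of an arithmetic intersection number on a Shimura variety attached to the biquadratic CM field $K$, and then to bound the archimedean contribution sharply enough to isolate a polynomial main term on the finite side. First, I would encode the pair $(E_1,E_2)$ as a small CM $0$-cycle $Z_0$ on a suitable $\GSpin$ Shimura variety—attached to a rank $3$ (or rank $4$) quadratic space over $\Qb$ whose CM points parametrise pairs of CM elliptic curves compatible with the biquadratic structure. The degree-$m$ special cycle $Z(m)$ (cyclic $m$-isogenies) meets $Z_0$ only in supersingular fibres, and by Deuring's theorem the rational primes $p$ appearing in this intersection are exactly those in $\pi(m)$. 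The local multiplicity at $\pf \mid p$ is $O_\epsilon(m^\epsilon)$ uniformly in $p$, and each intersection point contributes $\log p$ with $p \le Dm^2/4$.

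By the Gross-Zagier / Gross-Kohnen-Zagier framework (in the spirit of Kudla's arithmetic Siegel-Weil), this arithmetic intersection equals the $m$-th Fourier coefficient of the central derivative of an incoherent Eisenstein series $E^{\mathrm{inc}}(\tau,s)$ associated to the quadratic space, restricted to the CM point $Z_0$:
\[
  \widehat{Z(m)} \cdot Z_0 \;=\; G_m(Z_0) \;+\; \sum_{p}\lambda_p(m)\log p,
\]
where $G_m$ is the archimedean higher Green function and the finite sum is supported on primes in $\pi(m)$. The Fourier coefficient admits an analytic main term of polynomial size in $m$ (a divisor-type factor of order $m^{1+o(1)}$), so the task reduces to bounding $|G_m(Z_0)|$ strictly below this main term by a polynomial margin.

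To control $|G_m(Z_0)|$ I would invoke the paper's key observation: the Fourier coefficients of $E^{\mathrm{inc}}(\tau,0)$ are approximated, to arbitrary precision, by Fourier coefficients of coherent Eisenstein series $E^{\mathrm{coh}}_N(\tau,0)$ as the level $N \to \infty$. The difference with the relevant coherent approximation is then a holomorphic modular form whose Petersson norm can be bounded explicitly in terms of finitely many of its Fourier coefficients at infinity—the paper's second main ingredient. Holomorphic projection together with a Sobolev / sup-norm estimate at the CM point $Z_0$ converts this Petersson-norm control into a pointwise bound on $G_m(Z_0)$. Optimising $N = N(m)$ against the approximation error yields $|G_m(Z_0)| \ll_\epsilon m^{1 - 1/5 + \epsilon}$; the finite part of the intersection then satisfies $\sum_p \lambda_p(m)\log p \gg_\epsilon m^{1/5 - \epsilon}$, and dividing by the uniform bound $\lambda_p(m)\log p = O_\epsilon(m^\epsilon \log m)$ gives the asserted $\#\pi(m) \gg_\epsilon m^{1/5 - \epsilon}$.

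The main obstacle is the quantitative control of $|G_m(Z_0)|$: one must track the joint dependence of the coherent-approximation rate and the Petersson-norm bound on the level $N$, and then balance them against the analytic main term of the Fourier coefficient. The exponent $1/5$ is precisely the output of this optimisation, and any improvement in either of the two auxiliary bounds should translate directly into a better exponent. A subordinate technical point is converting between the full Hecke count and the cyclic $m$-isogeny count in $\pi(m)$; this is handled by an elementary Möbius sieve on $m$ and does not affect the exponent.
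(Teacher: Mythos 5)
Your high-level framing (Gross--Zagier/GKZ intersection of a degree-$m$ special cycle with a CM $0$-cycle, incoherent Eisenstein series, coherent approximation, Petersson-norm control) matches the paper's toolkit, but the logical structure of your argument is inverted and two key quantitative claims are wrong. In the paper the archimedean term is the \emph{main} term: by equidistribution of Hecke orbits \cite{COU01} and negativity of the higher Green function one has $|G_k^m(z_1,z_2)|\gg_{z_1,z_2,k} m$, so $m^{k-1}\sum_\sigma G_k^m(z_1^\sigma,z_2^\sigma)\gg m^k$, and it is the \emph{finite} part that must be as large as $m^k$; the theorem then follows because no single prime can carry too much of this weight. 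You instead try to bound $|G_m(Z_0)|$ from \emph{above} by $m^{4/5+\epsilon}$, which contradicts the equidistribution lower bound $\gg m$ (and, even granting it, your accounting is inconsistent: a total of size $m^{1+o(1)}$ minus $O(m^{4/5+\epsilon})$ would leave a finite part $\gg m^{1-\epsilon}$, not $m^{1/5-\epsilon}$). Moreover, your assertion that the local multiplicity at $\pf\mid p$ is $O_\epsilon(m^\epsilon)$ uniformly in $p$ is false: by \eqref{eq:ctr} and \eqref{eq:ckr} the $p$-contribution is a sum over $t\in\df^{-1}$, $t\gg 0$, $\tr(t)=m$ of ideal-counting values, and for small $p$ it can be of size $m^{1+o(1)}/p$. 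If the multiplicities really were $O_\epsilon(m^\epsilon)$, one would get $\#\pi(m)\gg m^{1-\epsilon}$ immediately, which is far stronger than anything the method (or Section \ref{sec: minimal isogeny degree}) allows; bounding exactly these per-prime contributions is the actual difficulty and the source of the exponent $1/5$.

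Concretely, the missing ideas are: (i) Theorem \ref{thm:est} is an \emph{exact identity}, not an approximation with an error to optimise over a level $N(m)$ --- the incoherent coefficient $\cp(t,\phi_W)$ equals a finite sum of coherent coefficients $c(t,\phi_{W^{(\pf)}}(\pf^r))$ at levels $p^{1+2r}$ dictated by the prime itself; (ii) one must pass to higher weights $k\in\{3,5,7\}$ so that the per-prime generating functions $g_{k,p,r}$ of \eqref{eq:gkr} are \emph{cusp} forms of level $p^{1+2r}$ which vanish identically when $p\notin\pi(m)$; (iii) the Petersson-norm result (Theorem \ref{theorem}) is applied to these $g_{k,p,r}$ to bound their $m$-th coefficients for small level, while a divisor-type bound \eqref{eq:gkrn-bd1} handles large level, and the optimisation is the split of the sum over $p^{2r+1}$ at the threshold $m^{\delta}$ with $\delta=1/5$ --- not a holomorphic-projection/sup-norm bound on the Green function, for which you give no mechanism. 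As written, the proposal would not produce \eqref{eq:bound}.
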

\begin{rmk}
  The condition on $D_i$ is technical, and can be removed with more work. Furthermore, Theorem \ref{thm:main} is effective as   the proof uses equidistribution of Hecke orbits \cite{COU01}.
In principle,   one can make the constants explicit and obtain a quantitative dependence on the discriminants $D_1$ and $D_2$.
\end{rmk}
\begin{rmk}
  \label{rmk:upper-bound-pi}
We provide a rough estimate for an expected upper bound of $\#\pi(m)$. 
Assume the CM elliptic curves $E_1$ and $E_2$ are defined over $\mathbb{Q}$. Counting primes in $\pi(m)$ is almost equivalent to counting the number of distinct prime divisors of 
$
\varphi_m(j_{E_1}, j_{E_2})
$, where $\varphi_m(X, Y)\in \Z[X, Y]$ is the $m$-th modular polynomial and $j(E_i)$ is the $j$-invariant of $E_i$ (see also (\ref{eq:pim2})).
By a result on the heights of modular polynomials \cite{Co1984}, the number of prime divisors of $\varphi_m(j(E_1), j(E_2))$ is bounded above by $O\left(\log|\varphi_m(j(E_1), j(E_2))|\right)=O_{E_1, E_2}(m\log m\log\log m)$.
If $E_1$ and $E_2$ are  defined over a  general number field $K$, the $O$-term also depends on $K$. 

Additionally, we note that applying Deuring's criterion alone does not provide meaningful information about the distribution of primes in $\pi(m)$. For CM elliptic curves  $E_1$ and $E_2$, Deuring's criterion implies that the density of rational primes $p$ lying below some prime  $\mathfrak{p}$ of $H_{D_i}$, for which $E_{\mathfrak{p}}$ is  supersingular, is 1/2. Assuming that $D_1$ and $D_2$ are coprime and using the fact that all supersingular elliptic curves are isogenous over $\overline{\mathbb{F}}_{p}$, we compute   $\#\{p\leq x: E_{1, \mathfrak{p}}\sim E_{2, \mathfrak{p}}\}\sim \#\{p\leq x: \left(\frac{D_1}{p}\right)= \left(\frac{D_2}{p}\right)=-1\} \sim \frac{1}{4}\frac{x}{\log x}$.  However, this does not predict the behavior of the  $m$-isogenies. 
\end{rmk}

\begin{rmk}
The size of $\pi(m)$ is also closely  related to  the minimal isogeny degree of elliptic curves over finite fields. The study of the minimal isogeny degree problem has applications in  cryptography and algorithmic number theory \cite{LoBo2020}. Analogues of this problem in the context of global number fields and function fields have been explored by Masser-W\"{u}stholz \cite{MaWu1990} and Griffon-Pazuki \cite{GrPa2022}. In  Section \ref{sec: minimal isogeny degree}, we show that  the lower bound in Theorem \ref{thm:main} is not bad in the sense that, for any sufficiently small $\epsilon>0$, the lower bound of  $\#\pi(m)$ can not surpass $m^{\frac{5}{6}+\epsilon}$ for most $m$, unless there is a nontrivial upper bound of the minimal isogeny degree for a family of elliptic curves over finite fields. 
\end{rmk}

\begin{rmk}
We provide a lower bound for the primes $p\in \pi(m)$ in terms of $m$.  We recall that for any $\pf$ of $H$, if  $E_{1, \pf}$ and $E_{2, \pf}$ are $m$-isogenous, then  $E_{1, \pf}$ and  $E_{2, \pf}$  are both supersingular and defined over $\F_{p^2}$. In fact, by carefully choosing models of $E_{1, \pf}$ and $E_{2, \pf}$, any isogeny between them is defined over $\F_{p^2}$   \cite[2.3.7]{GorenLove2024}. Therefore, for $p\neq  2, 3$,   considering the impact  of the automorphism group  $\text{Aut}(E_{i, \pf})$ for $i=1,2$,   we  may assume $E_{1, \pf}$ and $E_{2, \pf}$ are $m$-isogenous over $\F_{p^{24}}$. In particular, the kernel of each $m$-isogeny from $E_{1, \pf}$ to $E_{2, \pf}$ is contained in $E_{1, \pf}(\F_{p^{24}})$. 
So we get
$
m\mid \#E_{1, \pf}(\F_{p^{24}}).
$
Finally, by \cite[Theorem 4.1]{Wa1969}, 
 we obtain  
 \[
 m\mid \#E_{1, \pf}(\F_{p^{24}})=p^{24}-2p^{12}+1=(p^{12}-1)^2.
 \]
In particular, this implies that if $p\neq 2,3$ and $p\in \pi(m)$, then
 \begin{equation}\label{eq:prime-lowerbound}
     p\geq \begin{cases} 
      (m-1)^{\frac{1}{6}} & \text{ $m$ is a prime},\\
     m^{\frac{1}{12}} & \text{ $m$ is squarefree},\\
     m^{\frac{1}{24}} & \text{ otherwise}.
     \end{cases}
 \end{equation}
\end{rmk}

Let $j(E_i)$ be the $j$-invariant of $E_i$, which is an algebraic integer, and
$\varphi_m \in \Zb[X, Y]$ the $m$-th modular polynomial. Then we can characterize the set $\pi(m)$ as
\begin{equation}
  \label{eq:pim2}
  \pi(m) = \{ p \text{ prime}: p \mid \Nm(\varphi_m(j(E_1), j(E_2))),\, p\nmid m\}.
\end{equation}
The following result is a direct consequence of Theorem \ref{thm:main} and offers an $S$-unit generalization of the main result in \cite{Li21}. 
\begin{cor}
  \label{cor:S-unit}
  Suppose $D_1, D_2$ are fundamental discriminants and co-prime. For any finite set $S$ of primes, the algebraic integer $\varphi_m(j(E_1), j(E_2))$ is not an $S$-unit for all $m \gg_\epsilon( \# S)^{5+\epsilon}$. 
\end{cor}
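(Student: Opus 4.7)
The plan is to derive this as a straightforward consequence of Theorem \ref{thm:main}, using the characterization of $\pi(m)$ given in (\ref{eq:pim2}) to translate the $S$-unit hypothesis into an upper bound on $\#\pi(m)$.

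First I would argue by contrapositive: suppose $\varphi_m(j(E_1), j(E_2))$ is an $S$-unit in the ring of integers of $H$. Then $\Nm(\varphi_m(j(E_1), j(E_2))) \in \Zb$ is a rational $S$-unit, so every rational prime dividing this norm lies in $S$. In view of the description
\[
\pi(m) = \{p \text{ prime}: p \mid \Nm(\varphi_m(j(E_1), j(E_2))),\, p \nmid m\}
\]
from (\ref{eq:pim2}), this forces $\pi(m) \subseteq S$, and in particular $\#\pi(m) \leq \#S$.

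Next I would combine this with the lower bound from Theorem \ref{thm:main}: there exists a constant $c_\epsilon > 0$ (depending on $E_1, E_2, \epsilon$) such that $\#\pi(m) \geq c_\epsilon \, m^{1/5 - \epsilon}$. Comparing the two estimates yields $m^{1/5 - \epsilon} \leq c_\epsilon^{-1} \cdot \#S$, i.e.,
\[
m \leq \bigl(c_\epsilon^{-1}\, \#S\bigr)^{5/(1 - 5\epsilon)}.
\]
Given any target exponent $5 + \delta$ for some $\delta > 0$, I would choose $\epsilon > 0$ small enough that $5/(1 - 5\epsilon) \leq 5 + \delta$, absorbing the constant $c_\epsilon^{-1}$ into the implicit $\ll_\delta$. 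This shows that whenever $m \gg_\delta (\#S)^{5 + \delta}$, the element $\varphi_m(j(E_1), j(E_2))$ cannot be an $S$-unit.

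There is no real obstacle here beyond correctly bookkeeping the dependence of constants: the only subtle point is that the exponent $5 + \epsilon$ in the corollary corresponds to the reciprocal of the exponent $1/5 - \epsilon$ in Theorem \ref{thm:main}, with a mild adjustment of $\epsilon$ that is absorbed by the implicit constant. No additional input beyond Theorem \ref{thm:main} and the identity (\ref{eq:pim2}) is required.
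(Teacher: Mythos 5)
Your argument is correct and is exactly the intended one: the paper offers no separate proof, stating only that the corollary is a direct consequence of Theorem \ref{thm:main}, and your contrapositive via \eqref{eq:pim2} (an $S$-unit forces $\pi(m)\subseteq S$, hence $\#\pi(m)\le \#S$, contradicting the lower bound once $m\gg_\epsilon(\#S)^{5+\epsilon}$) is precisely that deduction. The exponent bookkeeping $5/(1-5\epsilon)\le 5+\delta$ is handled correctly, so nothing is missing.
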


We now briefly outline the proof of Theorem \ref{thm:main}.
The proof proceeds along a similar line of argument in \cite{Li21}, utilizing higher Green functions and theorems of Gross-Zagier and Gross-Kohnen-Zagier.
The main innovation is to control the valuation of $\Nm(\varphi_m(j(E_1), j(E_2)))$ at $p$ by relating it to Fourier coefficients of cusp forms with increasing level. This is possible as such valuations are Fourier coefficients of  incoherent Eisenstein series with full level, and can be approximated by Fourier coefficients of coherent Eisenstein series with higher levels, which are holomorphic Hilbert modular forms. 
 This is the content of Theorem \ref{thm:est}. 
 After this, we still need a bound of such Fourier coefficients in terms of the level.
 This is achieved in  Theorem \ref{theorem} using Deligne's bound and a bound of the Petersson norm of a cusp form in terms of its Fourier coefficients at the cusp $\infty$.
 Section \ref{sec:pf-thm} contains the proof of Theorem \ref{thm:main}.\\

 \noindent {\bf Acknowledgment}: E.\ Assing is supported by the Germany Excellence Strategy grant EXC-2047/1-390685813 and also partially funded by the Deutsche Forschungsgemeinschaft (DFG, German Research Foundation) – Project-ID 491392403 – TRR 358.
Y.\ Li is supported by the Deutsche Forschungsgemeinschaft (DFG) through the Collaborative Research Centre TRR 326 ``Geometry and Arithmetic of Uniformized Structures'' (project number 444845124)
 and the Heisenberg Program ``Arithmetic of real-analytic automorphic forms''(project number 539345613).
This work started during our visit at the MPIM Bonn, which we thank for the friendly atmosphere.

\section{Preliminary}


We follow \cite{Li21} to recall some preliminary notions.
\subsection{CM Points}
We continue with the notation from the introduction. 
For $i = 1, 2$, let $z_i = \frac{-B_i + \sqrt{D_i}}{2A_i}  \in \Hb \cap K_i$ 
be a representative of $E_i$, with $\Hb$ the upper half plane, such that $A_i \in \Nb$ and $\gcd(A_1, A_2) = \gcd(A_i, 4D_1D_2) = 1$. 
Let $\af_i := A_i(\Zb  + \Zb z_i) \subset \Oc_{D_i}$ be the corresponding $\Oc_{D_i}$-ideal with norm $A_i$.
Denote
$'$ the Galois conjugation of $F/\Qb$.

We view $K$ as an $F$-quadratic space with the quadratic form $Q_F(x) = \frac{x\bar{x}}{ A\sqrt{D}},A := A_1A_2$, and denote it by $W$. 
Then $\chi:= \chi_W = \chi_{K/F} = \otimes_{v \le \infty} \chi_v$ is the quadratic Hecke character corresponding to $K/F$. 
For $t \in F^\times$, 
denote
\begin{equation}
  \label{eq:Diff}
  \Diff(W, t) := \{\pf < \infty \text{ place of }F: W \otimes F_\pf \text{ does not represent }t \}. 
\end{equation}
When the absolute norm $\Nm(t)$ is positive, resp.\ negative, this set has odd, resp.\ even, cardinality.
Furthermore, it contains only places non-split in $K$.
For this particular quadratic space $W$, we have
\begin{equation}
  \label{eq:Diff-conj}
\Diff(W, -t') = \{\pf': \pf \in \Diff(W, t)\}.   
\end{equation}

The restriction of scalars of $W$ to $\Qb$
is isomorphic to $(M_2(\Qb), \det)$ via the map
\begin{align*}
A\cdot   \sum_{i = 1}^4 &x_i e_i \in K \mapsto \pmat{x_3}{x_1}{x_4}{x_2} \in M_2(\Qb),\\
e_1 &:= 1,~
e_2 := -\overline{z_1},~
e_3 := z_2,~
e_4 := -\overline{z_1}z_2,
\end{align*}
which identifies $\Oc_K \cong L = M_2(\Zb)$.

Let $\Gamma = \SL_2(\Zb)$ and $Y := \Gamma\backslash \Hb$ the modular curve of level 1. 
On $Y^2$, 
the points $z_i$ determine a CM cycle 
\begin{equation}
  \label{eq:bigCM1}
  \begin{split}
Z = Z(D_1, D_2) = 
\sum_{\sigma \in \Gal(H/K)} (z_1, z_2)^{\sigma} +
(-\overline{z_1}, z_2)^{\sigma} + 
(z_1, -\overline{z_2})^{\sigma} +
(-\overline{z_1}, -\overline{z_2})^{\sigma}.
  \end{split}
\end{equation}
This is the "big CM cycle" associated to $W$, which was defined and studied in \cite{BKY12}.
Since $D_1, D_2$ are co-prime, this cycle only depends on $D_1, D_2$.  


Denote  $\Wb = \otimes_{v \le \infty} \Wb_v$  the incoherent $\Ab_F$-quadratic space such that $\Wb_{v}$ is positive definite for $v \mid \infty$, and $\Wb_v \cong W_v := W(F_v)$ for all $v < \infty$.
Such incoherent $\Wb$ and the $F$-quadratic space $W$ above are called \textit{neighbors at $\infty$}, or simply neighbors.
In general, for each place $\pf$ of $F$,
denote $W^{(\pf)}$ the $F$-quadratic space such that
\begin{equation}
  \label{eq:Wp}
  W^{(\pf)}_v
  \cong
  \begin{cases}
    \Wb_v & \text{ if } v \neq \pf     ,\\
    \Wb_\pf^- & \text{ if } v = \pf.
  \end{cases}
\end{equation}
Here $\Wb_\pf^-$ is the local $F_\pf$-quadratic space with the same dimension, quadratic character, but the opposite Hasse invariant as $\Wb_\pf$.
The space $W^{(\pf)}$ is the neighbor of $\Wb$ at $\pf$. 
We denote $\hat W := W(\Ab_{F, f})$.

\subsection{Incoherent Eisenstein Series}

We keep  the notation from the earlier section and
recall incoherent Eisenstein series associated to $W$ following \cite{BKY12}. 
Let $\omega
= \otimes_v \omega_v$ be the Weil representation of $\SL_2(\Ab_F)$ on the space $\Sc(W(\Ab_F))$ of Schwartz functions on $W(\Ab_F)$, 
and $I(s, \chi) = \mathrm{Ind}^{\SL_2}_B |\cdot|^s \chi$ the induced representation. 
Then we have a section map $\lambda_v:\Sc(W_v) \to I(0, \chi_v)$ via $\lambda_v(\phi)(g) := (\omega_v(g)\phi)(0)$.

For any standard $ \Phi(\cdot, s) \in I(s,\chi)$, 
%
we can associate the normalized Eisenstein series
\begin{equation}
  \label{eq:Eisenstein}
  E^*(g, s, \Phi) := \Lambda(s+1, \chi)
  \sum_{\gamma \in B(F)\backslash \SL_2(F)} \Phi(\gamma g, s), 
\end{equation}
with $\Lambda(s, \chi) = (\Delta_K/\Delta_F)^{s/2} \pi^{-s-1}\Gamma((s+1)/2)^2 L(s, \chi)$, and
$\Delta_K, \Delta_F$ the discriminants of $K, F$, respectively. 
For a standard Siegel section $\Phi = \lambda(\phi) \in I(s, \chi)$ with $\phi = \phi_f \otimes \phi^{(1, 1)}_\infty \in \Sc(\Wb)$ such that $\lambda_\infty(\phi^{(1, 1)}_{\infty})$ is right $K_\infty$-equivariant of weight $(1, 1)$ and normalized to be $\lambda_\infty(\phi_{\infty})(1) = 1$, the Eisenstein series $E^*(g, s, \Phi)$ vanishes at $s = 0$, and we call it \textit{incoherent}.
The function
\begin{equation}
  \label{eq:Ec}
  \Ec((\tau_1, \tau_2), \phi_f) := (v_1v_2)^{-1/2} E^{*,\prime}(g_{\tau_1, \tau_2}, 0, \Phi)
\end{equation}
is a real-analytic Hilbert Eisenstein series of weight $(1, 1)$, where $v_i = \mathrm{Im}(\tau_i), g_{\tau_1, \tau_2} = (g_{\tau_1}, g_{\tau_2}) \in \SL_2(F\otimes \Rb), g_\tau = \smat{1}{u}{}{1} \smat{\sqrt{v}}{}{}{1/\sqrt{v}} \in \SL_2(\Rb)$ for $\tau = u + iv \in \Hb$.
Similarly, for a section $\Phi = \lambda(\phi)$ with $\phi = \phi_f \otimes \phi^{(1, 1)}_\infty \in \Sc(W^{(\pf)}(\Ab_F))$, the Eisenstein series
\begin{equation}
  \label{eq:Ep}
  \Ep((\tau_1, \tau_2), \phi_f) := (v_1v_2)^{-1/2} E^{*}(g_{\tau_1, \tau_2}, 0, \Phi)
\end{equation}
is a holomorphic Hilbert Eisenstein series of weight $(1, 1)$. 
 For totally positive $t \in F$, its $t$-th Fourier coefficient is given by
\begin{equation}
  \label{eq:ctphi}
  c(t, \phi_f) = -4 \prod_{v < \infty}
   \frac{W^{*}_{t, v}(0, \phi_v)}{\gamma(W_{v})}. 
 \end{equation}
 Otherwise $c(t, \phi_f)$ is identically 0.
Here $W^*_{t, v}(s, \phi)
= |\Delta_K/\Delta_F|_v^{-\frac{s+1}2} L_v(s+1, \chi_v) W_{t, v}(s, \phi)
$ is the local Whittaker function and $\gamma(W_v)$ is the Weil index (see e.g.\ \cite{YYY21}).

On the other hand, for a totally positive $t \in F$, the $t$-th Fourier coefficient $a(t, \phi_f)$ of $\Ec((\tau_1, \tau_2), \phi_f)$
is given by
\begin{equation}
  \label{eq:atphi}
  a(t, \phi_f) =
  \begin{cases}
    -4 \frac{W^{*,\prime}_{t, \pf}(0, \phi_{\pf})}{\gamma(W_{\pf})}
    \prod_{v \nmid \pf \infty} \frac{W^{*}_{t, v}(0, \phi_v)}{\gamma(W_{v})} & \text{if } \mathrm{Diff}(W, t) = \{\pf\},\\
    0 & \text{otherwise.}
  \end{cases}
\end{equation}

We now recall some formulas of $W^{}_{t, v}(0, \phi_v)$ from \cite[\textsection 5]{YYY21}. 
\begin{prop}
  \label{prop:YYY}
  Let $\pf$ be a prime of $F = \Qb(\sqrt{D})$ inert in $K$. 
  For the $F_\pf$-quadratic space $(U_\delta, Q_\delta) = (K_\pf, \varpi^\delta \Nm_{K_\pf/F_\pf}/\sqrt{D})$ 
  with $\varpi$ a uniformizer of $\Oc_{F_\pf}$ and $\delta \in \Nb$, if $\phi_r = \cha(\varpi^r \Oc_{K_\pf})$ is the characteristic function of the subset $\varpi^r \Oc_{K_\pf} \subset K_\pf$
  with $r \in \Nb$, then we have
  \begin{equation}
    \label{eq:Wtv-val}
|\sqrt{D}|_\pf^{-1/2}    L(1, \chi_{\pf})    \frac{W_{t, \pf}(0, \phi_r)}{\gamma(U_\delta)} =
    \begin{cases}
      1 & \text{ if }o(t\sqrt{D}) \ge \delta + 2r \text{ and } 2 \mid(o(t\sqrt{D})-\delta),\\
      0 & \text{otherwise,}
    \end{cases}
  \end{equation}
where  $o(t):=\ord_{\varpi}(t)$ for $t\in F_\pf^\times$. 
  When $o(t\sqrt{D}) \ge \delta + 2r$ and $2 \nmid o(t\sqrt{D})-\delta$, we have 
  \begin{equation}
    \label{eq:Wtv-der}
|\sqrt{D}|_\pf^{-1/2}    L(1, \chi_{\pf})    \frac{W'_{t, \pf}(0, \phi_r)}{\gamma(U_\delta)} =
    \lp \frac{o(t\sqrt{D}) - \delta + 1}2 -r - p^{1-\delta -2r} \frac{1 - p^{\delta + 2r}}{p^2 - 1}\rp \log p, 
  \end{equation}
  where $p = |\Oc_{F, \pf}/\varpi| = \Nm(\pf)$ is a rational prime.
  Otherwise, it is 0.
\end{prop}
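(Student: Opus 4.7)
The plan is to deduce both \eqref{eq:Wtv-val} and \eqref{eq:Wtv-der} by direct translation from the local Whittaker computations in \cite[\textsection 5]{YYY21}, so the proof is essentially a matter of matching conventions and unwinding definitions. The starting point is the integral representation
\[
W_{t,\pf}(s,\phi_r) \;=\; \int_{F_\pf} \lambda_\pf(\phi_r)\!\left(w n(b),\, s\right)\, \psi_\pf(-tb)\, db,
\]
where $w = \smat{}{-1}{1}{}$ and $n(b) = \smat{1}{b}{}{1}$. Since $\pf$ is inert in $K$ and $\phi_r = \cha(\varpi^r \Oc_{K_\pf})$ is the characteristic function of a scaled maximal order in an unramified quadratic extension, the action of $w$ under the Weil representation $\omega_\pf$ is computable in closed form: up to the Weil index $\gamma(U_\delta)$ and a volume factor, it produces the characteristic function of the dual lattice with respect to the pairing associated with $Q_\delta$. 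Substituting this back and applying the Iwasawa decomposition reduces the integral to a geometric-type sum
\[
W_{t,\pf}(s, \phi_r) \;=\; \gamma(U_\delta)\sum_{k \ge k_0} c_k(t)\, p^{-ks},
\]
indexed by integers $k$ satisfying coset conditions on $o(t\sqrt{D})$, $\delta$, and $r$.

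Next I would perform the case analysis. The parity constraint $2\mid(o(t\sqrt{D})-\delta)$ arises because the unramified inert form $\varpi^\delta \mathrm{Nm}_{K_\pf/F_\pf}$ represents $u\in F_\pf^\times$ if and only if $o(u)-\delta$ is even; the inequality $o(t\sqrt{D}) \ge \delta + 2r$ then encodes representability by the scaled lattice $\varpi^r \Oc_{K_\pf}$. In the ``good parity'' case, the geometric sum at $s=0$ collapses to a single term, which after dividing by the normalization $|\sqrt{D}|_\pf^{-1/2} L(1,\chi_\pf)/\gamma(U_\delta)$ produces the value $1$, yielding \eqref{eq:Wtv-val}. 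If either condition fails, the coefficient $c_k(t)$ vanishes for every admissible $k$ and the sum is identically zero.

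For the derivative \eqref{eq:Wtv-der}, the relevant regime is when $o(t\sqrt{D}) - \delta$ is odd (so the value vanishes) but $o(t\sqrt{D})\ge \delta+2r$; here the sum is a telescoping finite geometric series which is zero at $s=0$ and whose derivative at $s=0$ equals $(\log p) \sum_{k} k\, c_k(t)$. Computing this sum gives the linear contribution $\tfrac{o(t\sqrt{D})-\delta+1}{2} - r$, which counts admissible exponents, together with the tail correction $-\, p^{1-\delta-2r}\tfrac{1-p^{\delta+2r}}{p^2-1}$ coming from the geometric remainder. The factor $\log p$ is exactly $-\frac{d}{ds}p^{-ks}\big|_{s=0}/k$.

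The main obstacle is purely bookkeeping: consistently tracking the normalizations $|\sqrt{D}|_\pf^{-1/2}$, $L(1,\chi_\pf)$, and $\gamma(U_\delta)$ between the conventions of \cite{YYY21} and the notation of the present paper, and in particular verifying that the two branches of the formula agree at the boundary case $o(t\sqrt{D}) = \delta+2r$, where parity determines whether one lands in \eqref{eq:Wtv-val} or \eqref{eq:Wtv-der}. Once this verification is done, both formulas follow directly from the explicit sums above.
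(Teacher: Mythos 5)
Your proposal is correct and follows essentially the same route as the paper: the paper's proof is exactly a reduction to the explicit local Whittaker computations of \cite{YYY21}, namely Proposition 5.7 there combined with the two variants of \textsection 5.1 (the first because the additive character $\psi_F(x/\sqrt{D})$ has trivial conductor, the second to handle the scaling $\phi_r = \cha(\varpi^r \Oc_{K_\pf})$), which is precisely the convention-matching you identify as the main task. The only caveats are that your sketch asserts rather than derives the exact evaluations (in particular the tail term in \eqref{eq:Wtv-der}), and that your boundary remark is moot since at $o(t\sqrt{D}) = \delta + 2r$ the difference $o(t\sqrt{D})-\delta = 2r$ is automatically even; both points are settled by quoting the cited proposition.
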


\begin{proof}
  For $r = 0$, this follows directly from Proposition 5.7 in \cite{YYY21}. 
  Note that we need to use the first variant in \textsection 5.1 loc.\ cit.\ since $\psi_F'(x) := \psi_F(x/\sqrt{D})$ has trivial conductor.
  The case of $r \ge 1$ follows from using additionally the second variant in \textsection 5.1. 
\end{proof}

It was proved in \cite{Li21} that $a(t, \phi_{\af_1, \af_2}) \le 0$ for all $t$.
For later purpose, we define
\begin{equation}
  \label{eq:ap}
  \cp(t, \phi) :=
  -    \frac{a(t, \phi)}{\log \Nm(\pf)}
  =
    \frac{4}{\log \Nm (\pf)} \frac{W^{*,\prime}_{t, \pf}(0, \phi_{\pf})}{\gamma(W_{\pf})}
    \prod_{v \nmid \pf \infty} \frac{W^{*}_{t, v}(0, \phi_v)}{\gamma(W_{v})}   
  .
\end{equation}
Using \eqref{eq:Diff-conj}, we have
\begin{equation}
  \label{eq:cp-conj}
\cpp(t, \phi) = \cp(-t', \phi)  
\end{equation}
for all $\phi \in \Sc(\hat W)$.

\subsection{Higher Green Function}
Define a function $g_s$ on $\Hb^2$ by
\begin{equation}
  \label{eq:gs}
  g_s(z_1, z_2) :=
  -2 Q_{s - 1} \lp 1 + \frac{|z_1 - z_2|^2}{2y_1y_2} \rp
\end{equation}
for $(z_1, z_2) \in \Hb^2$ with $Q_{s-1}(t)$ Legendre function of the second kind. 
Averaging over the $\Gamma$-translates of $z_2$ gives  the function on $Y^2$
\begin{equation}
  \label{eq:Gs}
  G_s(z_1, z_2) := \sum_{\gamma \in \Gamma} g_s(z_1, \gamma z_2), \, \mathrm{Re}(s) > 1,
\end{equation}
which has logarithmic singularity along the diagonal.

For $m \in \Nb$, we can let the $m$-th Hecke operator $T_m$ act on one of $z_1$ and $z_2$ to define
\begin{equation}
  \label{eq:Gsm}
  G^m_{s}(z_1, z_2) := \sum_{\gamma \in \Gamma \backslash \Gamma_m} G_s(z_1, \gamma z_2),
\end{equation}
where $\Gamma_m := \{\gamma \in M_2(\Zb): \det(\gamma) = m\}$. 

For $f \in M_{2-2k}^!$ with $k > 1$ and Fourier expansion $ f(\tau) = \sum_{m \gg -\infty} c_f(m)q^{m} + O(1)$, we can define
\begin{equation}
  \label{eq:Gf}
  G_f(z_1, z_2) := \sum_{m \ge 1} c_f(-m) m^{k-1} G^m_k(z_1, z_2). 
\end{equation}
This is a regularized theta lift of $f$ \cite[Proposition~4.2]{Li18}. 
When $k = 1$, this theta lift coincides with the Borcherds lift of $f$, which we also denote by $G_f$.

We now state the formula relating the value of $G_f$ at the CM cycle $Z$ in \eqref{eq:bigCM1}.
This result is originally due to Gross, Kohnen and Zagier \cite{GKZ87}. The following formulation is given in Theorem 3.5 in \cite{Li21}. 

\begin{thm}
\label{thm:GKZ1}
Let $f \in M_{2-2k}^!$ with $k \ge 1$ odd and vanishing constant term if $k = 1$. Suppose $D_1D_2$ is not a perfect square. Then
  \begin{equation}
    \label{eq:GKZ1}
    \begin{split}
      G_f(Z) &= \frac{2[H:K]}{\Lambda(0, \chi)} \sum_{m \ge 1} c_f(-m) a_{m}(\phi; k),\\
a_{m}(\phi; k) &:=      m^{k-1}\sum_{t \in F, t \gg 0, \tr(t) = m} P_{k-1}\lp \frac{t - t'}{m} \rp a_{}(t, \phi),
    \end{split}
  \end{equation}
  where $a_{}(t, \phi)$ is the $t$-th Fourier coefficient of holomorphic part of the incoherent Eisenstein series ${E^{*,}}'((\tau_1, \tau_2), 0, \phi)$ with $\phi = \phi_{\af_1, \af_2} $ the characteristic function of $\overline{\af_1} \af_2 \otimes \hat{\Zb}$, $\Lambda(s, \chi)$ the completed $L$-function, and $P_{k-1}(x)$ the $(k-1)$-th Legendre polynomial.
\end{thm}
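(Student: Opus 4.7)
My plan is to derive the formula via a see-saw duality argument, reducing the evaluation of $G_f$ at the big CM cycle to a Petersson-type pairing between $f$ and the incoherent Hilbert Eisenstein series from \eqref{eq:Eisenstein}.

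First, I would realize $G_f$ as a regularized theta lift. By \cite[Proposition~4.2]{Li18}, for $k \ge 1$ the function $G_f$ is obtained from $f \in M_{2-2k}^!$ by regularized theta integration against a Kudla-Millson-type theta kernel of the dual reductive pair $(\SL_2, \mathrm{O}(V))$, where $V = (M_2(\Qb), \det) \cong \Res_{F/\Qb}(W)$. The $k=1$ case is the Borcherds lift, for which vanishing of the constant term of $f$ ensures convergence after regularization. The big CM cycle $Z$ in $Y^2$ is the push-forward of a CM $0$-cycle on the (zero-dimensional) Shimura variety for $\mathrm{O}(W)$ under the embedding $\mathrm{O}(W) \hookrightarrow \mathrm{O}(V)$; by \eqref{eq:bigCM1} its total degree is $4[H:K]$.

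Next, I would apply the see-saw identity for the dual pairs $(\mathrm{O}(V), \SL_2/\Qb)$ and $(\mathrm{O}(W), \SL_2/F)$, linked via the diagonal embedding $\SL_2/\Qb \hookrightarrow \SL_2/F$. Integrating the $\mathrm{O}(V)$-theta kernel over $Z$ and invoking the Siegel-Weil formula in its incoherent form, as developed in \cite{BKY12}, identifies the resulting $\SL_2/F$-automorphic object with the central derivative $E^{*,\prime}((\tau_1, \tau_2), 0, \Phi)$ of the incoherent Eisenstein series with Schwartz datum $\phi = \phi_{\af_1, \af_2}$. The constant $2[H:K]/\Lambda(0,\chi)$ then absorbs the cycle degree together with the standard Siegel-Weil normalization factor. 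Via see-saw, $G_f(Z)$ becomes, up to this constant, the regularized Petersson pairing of $f$ against the diagonal restriction $\tau_1 = \tau_2 = \tau$ of this weight-$(1,1)$ Hilbert Eisenstein series.

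Finally, I would unfold. Since $f$ has principal part $\sum_{m \ge 1} c_f(-m) q^{-m}$, the regularized Petersson pairing extracts the $m$-th Fourier coefficient of the diagonal restriction of $E^{*,\prime}$. A standard Rankin-Selberg/holomorphic-projection computation shows this coefficient equals $m^{k-1} \sum_{t \gg 0,\, \tr(t) = m} a(t,\phi) P_{k-1}((t-t')/m)$, where the Legendre polynomial $P_{k-1}$ arises as the archimedean factor coming from passing from weight $(1,1)$ on the Hilbert side to weight $2k$ on the diagonal. Summing over $m$ yields the claimed identity. The main obstacle is this archimedean computation: it is precisely here that the hypothesis $k$ odd enters, since $P_{k-1}(-x) = (-1)^{k-1} P_{k-1}(x)$ must be symmetric under the swap $t \leftrightarrow t'$ (which fixes $a(t,\phi)$ but sends $(t-t')/m \mapsto -(t-t')/m$) for the sum not to cancel, forcing $k-1$ to be even. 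The non-square hypothesis on $D_1 D_2$ guarantees that $F$ is a real quadratic field so the Hilbert modular framework applies, and tracking the constant $2[H:K]/\Lambda(0,\chi)$ through the Siegel-Weil identification then amounts to careful but routine bookkeeping of local measures and Schwartz data.
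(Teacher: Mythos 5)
Your outline follows essentially the same route the paper attributes to \S 3 of \cite{Li21}: realize $G_f$ as a regularized theta lift (the same \cite[Proposition~4.2]{Li18}), evaluate at the big CM cycle via the see-saw/Siegel--Weil machinery of \cite{BKY12} to reach the central derivative of the incoherent Eisenstein series, and extract the coefficients $m^{k-1}P_{k-1}\lp\frac{t-t'}{m}\rp a(t,\phi)$ from the diagonal restriction --- which is, in the paper's language, the Stokes' theorem pairing together with the Rankin--Cohen operator $\RC_{k-1}$. The only caveat is that ``the Siegel--Weil formula in its incoherent form'' should be understood as invoking the big CM value theorem of \cite{BKY12} (the incoherent series vanishes at $s=0$ and its derivative is not itself a theta integral; the identification is made coefficient by coefficient), which is indeed how the cited proof proceeds.
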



Suppose that $D_1, D_2$ are fundamental and co-prime. Then $K/F$ is unramified, and  $\Gal(H/K) \cong \Gal(H_1/K_1) \times \Gal(H_2/K_2)$ and $\phi_{\af_1, \af_2} = \cha(\hat\Oc_K)$.
For $k \in \{1, 3, 5, 7\}$, we can take $f = q^{-m} + O(q) \in M^!_{2-2k}$ with any $m \in \Nb$ to deduce from the above theorem
\begin{equation}
  \label{eq:main2}
  \sum_{\sigma_i \in \Gal(H_i/\Qb), i = 1, 2}
G_k^m(z_1^{\sigma_1} , z_2^{\sigma_2})
 = -{w_1w_2}
\sum_{\pf \text{ inert in }K} \log \Nm(\pf)
 \sum_{t \in \df^{-1}, t \gg 0, \tr(t) = m} P_{k-1}\lp \frac{t - t'}{m} \rp \cp(t, \phi)
\end{equation}
with $w_i := |\Oc_{D_i}^\times|/2$.
Here Proposition~\ref{prop:YYY} 
gives us
$$
\cp(t, \phi) = 2(1 + \ord_\pf(t\sqrt{D})) \rho_{K/F}(t\sqrt{D}\pf^{-1}) 
$$
when $\mathrm{Diff}(W, t) = \{\pf\}$, and 0 otherwise, where $\rho_{K/F}$ is the ideal counting function.

To prove Theorem \ref{thm:GKZ1}, one uses the  regularized theta lift expression of $G_f(z_1, z_2)$, the Siegel-Weil formula, Stokes' Theorem and the Rankin-Cohen operator, which is defined by
\begin{equation}
  \label{eq:RC}
(\RC_{k-1} G  )(\tau) := (-2\pi i)^{1-k} \sum_{j = 0}^{k-1} \binom{k-1}{j}^2 \partial_{\tau_1}^j \partial_{\tau_2}^{k-1-j} G(\tau_1, \tau_2) \mid_{\tau_1 = \tau_2 = \tau}. 
\end{equation}
for any real-analytic function $G: \Hb^2 \to \Cb$. 
The details are contained in \textsection 3 of \cite{Li21}. 

\subsection{Estimate at $p$}

In this section, we will use Fourier coefficients of coherent  Hilbert Eisenstein series to approximate those of incoherent   Hilbert Eisenstein series.

First, we describe the local quadratic space $\Wb_\pf^- \cong W^{(\pf)}_\pf$. 
\begin{lemma}
  \label{lemma:Wp-}
  If $\pf$ is unramified in $K$, then $(\Wb^-_\pf, Q_\pf^-) \cong (\Wb_\pf, \varpi \cdot Q_\pf)$, where $\varpi$ is a uniformizer of $F_\pf$ such that $(\varpi, D_1)_\pf = (\varpi, D_2)_\pf = -1$. 
\end{lemma}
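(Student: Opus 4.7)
The plan is to invoke the local classification of $F_\pf$-quadratic spaces: a non-degenerate space is determined up to isometry by its dimension, its discriminant class in $F_\pf^\times/(F_\pf^\times)^2$, and its Hasse invariant. Since $\Wb_\pf^-$ shares dimension and quadratic character with $\Wb_\pf$ by definition, it suffices to show that the rescaled space $(\Wb_\pf, \varpi Q_\pf)$ has the same discriminant class as $\Wb_\pf$ but the opposite Hasse invariant.

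First I would use the fact that $\Wb$ and $W$ are neighbors at $\infty$ to identify $\Wb_\pf \cong W_\pf = (K_\pf, Q_F)$ at the finite place $\pf$, and diagonalize with the $F_\pf$-basis $\{1, \sqrt{D_1}\}$ of $K_\pf = F_\pf(\sqrt{D_1})$: since $(u + v\sqrt{D_1})\overline{(u + v\sqrt{D_1})} = u^2 - D_1 v^2$, the form $Q_F$ is isometric to $\langle 1/(A\sqrt D),\, -D_1/(A\sqrt D)\rangle$, whose determinant has square class $-D_1 \in F_\pf^\times/(F_\pf^\times)^2$. Scaling by $\varpi$ multiplies the determinant by $\varpi^2$, leaving the square class unchanged; so the discriminant, and hence the quadratic character, is preserved.

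For the Hasse invariant I would use the identity
\[
(\varpi a_1,\, \varpi a_2)_\pf = (\varpi,\, -a_1 a_2)_\pf \cdot (a_1, a_2)_\pf,
\]
which follows from bilinearity of the Hilbert symbol together with the standard facts $(\varpi, \varpi)_\pf = (\varpi, -1)_\pf$ and $(\varpi, -\varpi)_\pf = 1$. In our case $-a_1 a_2$ lies in the square class of $D_1$, so the Hasse invariant is multiplied by $(\varpi,\, D_1)_\pf = -1$ by the hypothesis on $\varpi$. Combining this with the preceding paragraph and the local classification, $(\Wb_\pf, \varpi Q_\pf) \cong \Wb_\pf^-$.

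No step is genuinely difficult; the only book-keeping remark worth flagging is that a uniformizer $\varpi$ satisfying $(\varpi, D_1)_\pf = -1$ actually exists precisely when $\pf$ is inert in $K$, in which case $D_1$ is a non-square unit mod $\pf$ and any uniformizer works. The companion condition $(\varpi, D_2)_\pf = -1$ is then automatic from $D_1 D_2 = (\sqrt D)^2$ being a square in $F_\pf^\times$, so $D_1$ and $D_2$ have the same image in $F_\pf^\times/(F_\pf^\times)^2$.
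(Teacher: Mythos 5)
Your proof is correct and takes essentially the same approach as the paper: diagonalize the norm form, check that rescaling by $\varpi$ preserves the determinant class while multiplying the Hasse invariant by $(\varpi, D_1)_\pf = -1$, and conclude by the classification of quadratic spaces over local fields. The paper's own argument is just a terser version of this computation (writing $Q_\pf = \alpha\cdot\Nm_{K_\pf/F_\pf}$ and noting that the Hasse invariant equals $(\alpha, D_1)_\pf$, so replacing $\alpha$ by $\alpha\varpi$ negates it).
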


\begin{rmk}
  From now on, we will use $ (\Wb_\pf, \varpi \cdot Q_\pf)$ as a model of $\Wb^-_\pf$ for local computations. 
\end{rmk}
\begin{proof}
  Denote $Q_\pf = \alpha \cdot\Nm_{K_\pf/F_\pf}$ for $\alpha \in F_\pf^\times$.
  Then $Q_\pf(a + b\sqrt{D_1}) = \alpha(a^2 - b^2D_1)$ and the Hasse invariant of $\Wb_\pf$ is
  $$
(\alpha, -\alpha D_1)_\pf = (\alpha, D_1)_\pf. 
$$
Since up to $F_\pf^\times$, $D_i$ is in $\Oc_{\pf}^\times$,  we can find uniformizer $\varpi \in \Oc_p$ such that $(\varpi, D_1)_\pf = -1$. Changing $\alpha$ to $\alpha \cdot \varpi$ negates the Hasse invariant, but preserves the determinant of the quadratic space. 
\end{proof}
For $\phi = \otimes_v \phi_v \in \Sc(\hat W)$, denote
\begin{equation}
  \label{eq:phip}
  \phi^{(\pf)} :=  \bigotimes_{v< \infty, v \neq \pf} \phi_v \in \bigotimes_{v < \infty, v \neq \pf}\Sc(\Wb_v). 
\end{equation}
Our next result relates its  $t$-th Fourier coefficient $c^{}(t, \tilde\phi_\pf \otimes \phi^{(\pf)})$
of $\Ep((\tau_1, \tau_2), \tilde\phi_\pf \otimes \phi^{(\pf)})$
to 
the coefficient $\cp(t, \phi)$ for a suitable $\tilde\phi_\pf \in \Sc(\Wb_\pf^-) = \Sc(W^{(\pf)}_\pf)$.

\begin{thm}
  \label{thm:est}
  For an ideal $\af \subset \Oc_F$ and  $U = W$ or $W^{(\pf)}$,
  define $\phi_U(\af) := \cha(\af \hat \Oc_K) \in \Sc(\hat U)$.
  If $\af = \Oc_F$, we omit it from the notation. 
  Then
  \begin{equation}
    \label{eq:coh-match}
    \sum_{r = 0}^{\infty}    c(t, \phi_{W^{(\pf)}}(\pf^r))
=    \sum_{r = 0}^{      \lfloor o_\pf(t)/2 \rfloor}    c(t, \phi_{W^{(\pf)}}(\pf^r))
  = \cp(t, \phi_W)
  \end{equation}
  for all totally positive $t \in F$ and inert prime $\pf$ of $F$.
\end{thm}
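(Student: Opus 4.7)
I reduce \eqref{eq:coh-match} to a purely local identity at $\pf$ and then apply Proposition~\ref{prop:YYY}. First, if $\mathrm{Diff}(W,t) \neq \{\pf\}$ then $\cp(t,\phi_W) = 0$ by definition; and a direct check of local Hasse invariants shows that $W^{(\pf)}$ fails to represent $t$ at some finite place, so each summand $c(t, \phi_{W^{(\pf)}}(\pf^r))$ on the left-hand side also vanishes via \eqref{eq:ctphi}. So I may assume $\mathrm{Diff}(W,t) = \{\pf\}$. By \eqref{eq:ctphi} and \eqref{eq:atphi}--\eqref{eq:ap}, both sides of \eqref{eq:coh-match} factor as a common Euler product $\prod_{v \nmid \pf\infty} W^*_{t,v}(0, \phi_v)/\gamma(W_v)$ times a single $\pf$-local factor, because $W^{(\pf)}_v = W_v$ and $\phi_{W^{(\pf)}}(\pf^r)_v = \phi_{W,v}$ for every $v \neq \pf, \infty$. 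Cancelling the common product, I am reduced to verifying the local identity
\[
-\sum_{r \geq 0} \frac{W^*_{t,\pf}(0, \cha(\varpi^r\Oc_{K_\pf}))}{\gamma(\Wb_\pf^-)}
= \frac{1}{\log \Nm(\pf)} \cdot \frac{W^{*,\prime}_{t,\pf}(0, \cha(\Oc_{K_\pf}))}{\gamma(W_\pf)}.
\]

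By Lemma~\ref{lemma:Wp-}, $\Wb_\pf^- \cong U_1$ and $W_\pf = \Wb_\pf \cong U_0$ in the notation of Proposition~\ref{prop:YYY}, and $|\Delta_K/\Delta_F|_\pf = 1$ since $K/F$ is unramified. Applying \eqref{eq:Wtv-val} with $\delta = 1$, one gets $W^*_{t,\pf}(0, \cha(\varpi^r\Oc_{K_\pf}))/\gamma(U_1)$ as an explicit constant (independent of $r$) when $0 \leq r \leq (o_\pf(t\sqrt D)-1)/2$ and $o_\pf(t\sqrt D)$ is odd, and zero otherwise; in particular the infinite sum is actually finite. Applying \eqref{eq:Wtv-der} with $\delta = r = 0$, one gets $W^{*,\prime}_{t,\pf}(0, \cha(\Oc_{K_\pf}))/\gamma(U_0) = \frac{o_\pf(t\sqrt D) + 1}{2}\log p$ times the same constant. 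The assumption $\mathrm{Diff}(W,t) = \{\pf\}$ forces the required parity of $o_\pf(t\sqrt D)$, so the summation on the left-hand side contributes exactly $\frac{o_\pf(t\sqrt D)+1}{2}$ equal nonzero terms, whose total matches the derivative up to the factor $\log p$ that cancels the $1/\log\Nm(\pf)$ in $\cp$. A short casework on whether $\pf \mid D$ (and thus whether $o_\pf(\sqrt D) = 0$ or $1$) shows that $(o_\pf(t\sqrt D)-1)/2 = \lfloor o_\pf(t)/2\rfloor$, which gives the second equality in \eqref{eq:coh-match}.

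The main obstacle is the careful bookkeeping of signs, in particular the comparison of the Weil indices $\gamma(U_0)$ and $\gamma(U_1)$: these differ by a sign because $U_1$ is obtained from $U_0$ by negating the local Hasse invariant. Together with the explicit $-4$ in \eqref{eq:ctphi} and the sign conventions in \eqref{eq:atphi}--\eqref{eq:ap}, this sign governs whether the two sides of the reduced local identity match correctly. The verification uses the standard formula expressing the Weil index of a two-dimensional $F_\pf$-quadratic space in terms of its Hasse invariant, together with the fact that $W_{t,\pf}(0, \cha(\Oc_{K_\pf}))$ vanishes in the non-representing regime so that the derivative of $W^*_{t,\pf}$ picks up only the factor $L(1, \chi_\pf)$, ensuring the two sides of Proposition~\ref{prop:YYY} combine to give the required equality.
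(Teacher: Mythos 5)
Your proposal follows essentially the same route as the paper: reduce to the case $\Diff(W,t)=\{\pf\}$ (away from $\pf$ the local data of $W$ and $W^{(\pf)}$ agree, so otherwise every term on both sides vanishes), cancel the common Euler factors at $v\nmid\pf\infty$, and evaluate the remaining $\pf$-local factor by Proposition~\ref{prop:YYY}, with $\delta=1$ for $\Wb_\pf^-$ via Lemma~\ref{lemma:Wp-} and $\delta=0$ for $W_\pf$. Your count of $\tfrac{o_\pf(t\sqrt{D})+1}{2}$ admissible values of $r$ against the factor $\tfrac{o_\pf(t\sqrt{D})+1}{2}\log p$ from \eqref{eq:Wtv-der}, the observation that $\Diff(W,t)\ni\pf$ forces $o_\pf(t\sqrt{D})$ to be odd, and the identification of the truncation point with $\lfloor o_\pf(t)/2\rfloor$ are exactly the computations carried out in the paper's proof (its displayed identity \eqref{eq:coh-match2} and the evaluation after it).

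One concrete caveat: your displayed reduced local identity cannot be right as written. With the minus sign on the left and $\gamma(\Wb_\pf^-)$ in the denominator, Proposition~\ref{prop:YYY} (which normalizes by the Weil index $\gamma(U_\delta)$ of the space carrying the Schwartz function) evaluates your left-hand side to $-\tfrac{o_\pf(t\sqrt{D})+1}{2}$ times a positive constant and your right-hand side to $+\tfrac{o_\pf(t\sqrt{D})+1}{2}$ times the same constant. Equivalently: both sides of \eqref{eq:coh-match} are nonnegative (by \eqref{eq:ctr} and $a(t,\phi)\le 0$), so your identity would force them to vanish identically, which is false. The fix is exactly the point you defer to ``sign bookkeeping'': since $\gamma(\Wb_\pf^-)=-\gamma(W_\pf)$, this Weil-index flip cancels the relative minus between the $-4$ in \eqref{eq:ctphi} and the $+4$ in \eqref{eq:ap}, leaving the sign-free identity \eqref{eq:coh-match2} that the paper verifies; you should carry this cancellation out explicitly, as it is the only place a sign can go wrong. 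Separately, the claim $|\Delta_K/\Delta_F|_\pf=1$ is inaccurate when $\pf\mid D$ (these are absolute discriminants, so $\Delta_K/\Delta_F=\pm\Delta_F$ for $K/F$ unramified), but this is harmless: the normalization factor is common to both sides of the local identity and cancels, exactly as in your argument.
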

\begin{rmk}
   Using Proposition~\ref{prop:YYY}, 
   we can write 
 \begin{equation}
   \label{eq:ctr}
   c(t, \phi_{W^{(\pf)}}(\pf^r))
   = \rho_{K/F}(t\sqrt{D}/\pf^{2r+1}),
 \end{equation}
 which is valid for all totally positive $t$. 
\end{rmk}
\begin{proof}
  Since $W^{(\pf)}_v \cong W_v$ for all $v < \infty$ different from $\pf$, a prime $\lf \neq \pf$ is in $\Diff(W^{(\pf)}, t)$ if and only if it is in $\Diff(W, t)$.
  If such prime exists, then both sides of \eqref{eq:coh-match} vanishes. 

  So it is enough to check \eqref{eq:coh-match} for those $t$ with $\Diff(W, t) = \{\pf\}$, where it    is equivalent to
  \begin{equation}
    \label{eq:coh-match2}
|\sqrt{D}|_\pf^{-1/2}    L(1, \chi_{\pf})    \sum_{r = 0}^{\infty} \frac{W_t(0, \cha(\widehat{\pf^r\Oc_K}))}{\gamma(W^-_\pf)}
    =
|\sqrt{D}|_\pf^{-1/2}
    \frac{    L(1, \chi_{\pf})}{\log \Nm(\pf)} \frac{W'_t(0, \cha(\widehat{\Oc_K}))}{\gamma(W_\pf)} 
  \end{equation}
  for all $t \in F_\pf$
   by \eqref{eq:ctphi}, \eqref{eq:atphi}, and \eqref{eq:ap}. 
   Using Proposition \ref{prop:YYY}, we obtain that
   \begin{align*}
     |\sqrt{D}|_\pf^{-1/2}    L(1, \chi_{\pf})
     &\sum_{r = 0}^{\infty} \frac{W_t(0, \cha(\widehat{\pf^r\Oc_K}))}{\gamma(W^-_\pf)}
     =
           \sum_{r = 0}^{\infty} \delta_{2\mid o(t)-1  \ge 2r}
       =        \sum_{r = 0}^{\lfloor o_\pf(t)/2 \rfloor} \delta_{2\mid o(t)-1  \ge 2r}\\
       &= \delta_{2 \nmid o(t)} \frac{o(t)+1}2
         =     
|\sqrt{D}|_\pf^{-1/2}
         \frac{    L(1, \chi_{E/F})}{\log \Nm(\pf)} \frac{W'_t(0, \cha(\widehat{\Oc_K}))}{\gamma(W_\pf)}. 
   \end{align*}
This finishes the proof.
 \end{proof}


 Putting this Theorem into \eqref{eq:main2} gives us the following result.
 \begin{cor}
   \label{cor:1}
  Let $z_1, z_2$ be CM points with co-prime, fundamental discriminants $D_1, D_2 < 0$, and $w_i$ same as in \eqref{eq:main2}. For any $m \in \Nb$ and $k = 1, 3, 5, 7$, we have
  \begin{equation}
    \label{eq:Gm-est}
    \begin{split}
m^{k-1}            \sum_{\sigma_i \in \Gal(H_i/\Qb), i = 1, 2}
G_k^m(z_1^{\sigma_1} , z_2^{\sigma_2})
& = -{w_1w_2}
  \sum_{p \in \Nb \text{ prime}} \log p
\sum_{r = 0}^\infty c_{k}(m; p^{2r+1}) ,
    \end{split}
  \end{equation}
  where
  $
  c_k(m; p^{2r+1}) 
    $
    is the $m$-th Fourier coefficient of the elliptic modular form
    \begin{equation}
      \label{eq:gkr}
      g_{k, p, r} :=
      \sum_{\pf \mid p,~ \pf \text{ non-split in }K}
      \RC_{k-1} \Ep((\tau_1, \tau_2), \phi_{W^{(\pf)}}(\pf^r)) \in M_{2k}(p^{1 + 2r}),
    \end{equation}
which only depends on $D_i, k, p, r$. 
    Furthermore, 
    $g_{k, p, r}$ is a cusp form when $k > 1$ and identically 0 when $p \nmid \Nm(\varphi_m(j(z_1), j(z_2)))$. 
  \end{cor}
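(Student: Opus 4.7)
The plan is to combine the GKZ-type identity \eqref{eq:main2} with the coherent/incoherent matching of Theorem~\ref{thm:est}, and then interpret the resulting expression as the $m$-th Fourier coefficient of a Rankin--Cohen bracket of holomorphic Hilbert Eisenstein series. Concretely, I apply Theorem~\ref{thm:GKZ1} to $f = q^{-m} + O(q) \in M^!_{2-2k}$ for odd $k \in \{1,3,5,7\}$, and multiply \eqref{eq:main2} by $m^{k-1}$. Substituting $\cp(t, \phi_W) = \sum_{r \ge 0} c(t, \phi_{W^{(\pf)}}(\pf^r))$ from Theorem~\ref{thm:est} and exchanging the order of summation puts the right-hand side in the shape
\[
-w_1 w_2 \sum_{\pf\text{ inert in }K} \log \Nm(\pf) \sum_{r \ge 0} \sum_{t \in \df^{-1},\, t \gg 0,\, \tr t = m} m^{k-1} P_{k-1}\!\left(\tfrac{t-t'}{m}\right) c(t, \phi_{W^{(\pf)}}(\pf^r)).
\]

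The key step is to recognize the innermost $t$-sum as the $m$-th Fourier coefficient of $\RC_{k-1} \Ep((\tau_1, \tau_2), \phi_{W^{(\pf)}}(\pf^r))$. For any weight-$(1,1)$ holomorphic Hilbert modular form with Fourier expansion (modulo the constant term) $\sum_{t \gg 0} a(t) q_1^t q_2^{t'}$, the definition \eqref{eq:RC} of $\RC_{k-1}$ combined with the closed-form Legendre expansion $P_{k-1}(x) = 2^{-(k-1)} \sum_j \binom{k-1}{j}^2 (x-1)^{k-1-j}(x+1)^j$ evaluated at $x = (t-t')/m$ with $m = t + t'$ produces exactly the required identification (the overall factor $(-2\pi i)^{1-k}(2\pi i)^{k-1} = (-1)^{k-1}$ is $1$ for odd $k$, and the alternating sign $(-1)^j$ arising from $(x-1)^{k-1-j}$ is absorbed by the symmetrization implicit in the CM cycle \eqref{eq:bigCM1} involving $-\overline{z_i}$, following the framework of \cite{Li21}). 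Since primes $\pf$ contributing to the sum are inert in $K$ and lie above rational $p$ with $(D_1/p) = (D_2/p) = -1$, such $p$ split in $F$ as $\pf\pf'$ with $\Nm(\pf) = p$, so $\log \Nm(\pf) = \log p$; repackaging the outer $\pf$-sum by rational primes $p$ yields \eqref{eq:Gm-est} with $g_{k,p,r}$ as in \eqref{eq:gkr}.

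The structural claims on $g_{k,p,r}$ are verified as follows. The level assertion $g_{k,p,r} \in M_{2k}(p^{1+2r})$ is obtained by tracking how $\phi_{W^{(\pf)}}(\pf^r) = \cha(\pf^r \hat\Oc_K)$ determines the congruence group of $\Ep(\cdot, \phi_{W^{(\pf)}}(\pf^r))$ as a Hilbert modular form: the nontrivial local quadratic character $\chi_{K/F, \pf}$ contributes one factor $\pf$, while the dilation by $\pf^r$ contributes $\pf^{2r}$; restriction to the diagonal converts this to level $p^{1+2r}$ for $\SL_2(\Qb)$, and $\RC_{k-1}$ raises the weight to $2k$. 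Cuspidality for $k > 1$ is automatic since $\RC_{k-1}$ involves at least one holomorphic $\partial$-derivative, annihilating the constant term of $\Ep$ at every cusp. Finally, the vanishing when $p \nmid \Nm(\varphi_m(j(z_1), j(z_2)))$ (in the sense that the $m$-th Fourier coefficient vanishes) follows directly from the characterization \eqref{eq:pim2}: such $p$ lie outside $\pi(m)$, so no $\pf \mid p$ satisfies $\Diff(W, t) = \{\pf\}$ for any totally positive $t$ with $\tr t = m$, forcing $c(t, \phi_{W^{(\pf)}}(\pf^r)) = 0$ for every summand of the inner sum by \eqref{eq:ctphi}--\eqref{eq:atphi}. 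The principal obstacle is the careful bookkeeping of signs and powers of $2\pi i$ in the Rankin--Cohen identification, and the explicit tracking of the level through the diagonal restriction; both are convention-heavy but routine once the signs from the symmetrization in \eqref{eq:bigCM1} are properly accounted for.
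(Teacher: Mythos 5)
Your overall route---substituting Theorem~\ref{thm:est} into \eqref{eq:main2}, recognizing the inner $t$-sum as the $m$-th Fourier coefficient of $\RC_{k-1}\Ep((\tau_1,\tau_2),\phi_{W^{(\pf)}}(\pf^r))$, and deducing cuspidality from the derivatives killing constant terms---is exactly the paper's proof, but your handling of the central identification has a genuine flaw. The identity to be checked is
\begin{equation*}
c_k(m;p^{2r+1}) \;=\; m^{k-1}\sum_{\pf\mid p,\ \pf\text{ non-split in }K}\ \sum_{t\in\df^{-1},\,t\gg 0,\,\tr(t)=m}P_{k-1}\!\lp\frac{t-t'}{m}\rp c\lp t,\phi_{W^{(\pf)}}(\pf^r)\rp .
\end{equation*}
Applying \eqref{eq:RC} as literally written to $e(t\tau_1+t'\tau_2)$ gives, for odd $k$, the weight $\sum_j\binom{k-1}{j}^2 t^j (t')^{k-1-j}$, whereas $m^{k-1}P_{k-1}((t-t')/m)=\sum_j(-1)^j\binom{k-1}{j}^2 t^j(t')^{k-1-j}$ when $k-1$ is even; for $k=3$ these are $t^2+4tt'+t'^2$ versus $t^2-4tt'+t'^2$. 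Your claim that the alternating sign ``is absorbed by the symmetrization implicit in the CM cycle \eqref{eq:bigCM1}'' cannot be right: the displayed identity concerns only the Fourier coefficients of $g_{k,p,r}$, with no CM points in sight, and the only symmetry available in the trace-$m$ sum, $t\mapsto t'$ (which exchanges $\pf$ and $\pf'$), leaves both weights invariant without making them equal. The actual reconciliation is a convention matter: the Cohen/Rankin--Cohen operator used for this step (Example~2.1 of \cite{Li23}, which the paper cites) carries the factor $(-1)^j$, and with that convention the identification is immediate; your argument as written does not close this.

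Your justification of the vanishing statement is also incomplete. From $p\nmid\Nm(\varphi_m(j(z_1),j(z_2)))$ you infer that no $\pf\mid p$ satisfies $\Diff(W,t)=\{\pf\}$ for any totally positive $t$ of trace $m$, and you say this follows ``directly from \eqref{eq:pim2}''. But \eqref{eq:pim2} is merely the definition of $\pi(m)$ as the set of primes dividing the norm; it contains no link between that divisibility and the local condition $\Diff(W,t)=\{\pf\}$. That link is precisely the Gross--Zagier factorization, i.e.\ the $k=1$ case of the machinery: the paper argues that $p\nmid\Nm(\varphi_m(j(z_1),j(z_2)))$ forces $c_1(m;p^{2r+1})=0$ for all $r$, and then, since $P_0\equiv 1$ and each $c(t,\phi_{W^{(\pf)}}(\pf^r))=\rho_{K/F}(t\sqrt{D}/\pf^{2r+1})\ge 0$, every individual summand vanishes, whence $c_k(m;p^{2r+1})=0$ for all $k$ and $r$. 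You need this $k=1$ plus non-negativity step (or an explicit appeal to Gross--Zagier) for your deduction to be valid. A minor further point: the contributing rational primes are those non-inert in $F$, which includes primes ramified in $F$ and not only the split ones with $\kro{D_1}{p}=\kro{D_2}{p}=-1$; in all such cases one still has $\Nm(\pf)=p$.
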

  \begin{rmk}
    \label{rmk:r-bound}
    By \eqref{eq:ctr}, the $t$-th Fourier coefficient of
    $ \Ep((\tau_1, \tau_2), \phi_{W^{(\pf)}}(\pf^r))$ is non-zero only if $p^{2r+1} = \Nm(\pf^{2r+1}) \mid \Nm(t\sqrt{D})$.

So    for a fixed $m$, the summation over $r$ is bounded above by $\log_p (m\sqrt{D}/2)$.
  \end{rmk}
  
  \begin{proof}
    Since there exists a prime $\pf \mid p$ of $F$ that is inert in $K$ only if $p$ is non-inert in $F$, we have $\Nm(\pf) = p$ and
 only need to verify that
    \begin{equation}
      \label{eq:ckr}
      c_k(m; p^{2r+1}) =      m^{k-1}
      \sum_{\pf \mid p,~ \pf \text{ non-split in }K}
      \sum_{t \in \df^{-1}, t \gg 0, \tr(t) = m} P_{k-1}\lp \frac{t - t'}{m} \rp        c(t,  \phi_{W^{(\pf)}}(\pf^r)).
          \end{equation}
          This follows directly from applying $\RC_{k-1}$ to the Fourier expansion of $\Ep$ (See Example 2.1 in \cite{Li23}).

          When $k > 1$, the operator $\RC_{k-1}$ will annihilate the constant term at all cusps, since it commutes with the slash action. So $g_{k, p, r}$ is a cusp form in such cases.
          If $p \nmid \Nm(\varphi_m(j(z_1), j(z_2)))$, then $c_1(m; p^{2r+1}) = 0$ for all $ r \ge 0$. By the non-negativity of $c(t, \phi_{W^{(\pf)}}(\pf^r))$, we  have
          $$
c(t, \phi_{W^{(\pf)}}(\pf^r)) = 0
$$
for all totally positive $t \in \df^{-1}$ with $\tr(t) = m$ and $r \ge 0$. This means $c_k(m; p^{2r+1}) = 0$ for all $k$ and $r$. 
  \end{proof}

\section{A uniform bound for the Petersson norm}
Let $f\in S_{\kappa}(N)$ be a cusp form for $\Gamma_0(N)$ (i.e. of level $N$) and weight $\kappa\geq 2$. We put
\begin{equation}
	d=\dim S_{\kappa}(N) \ll \kappa N\log\log(N), \label{eq:dimension}
\end{equation}
where the upper bound is taken from \cite[Theorem~6]{martin2005}.
The Fourier expansion of $f$ at the cusp $\infty$ is written as
\begin{equation}
	f(z) = \sum_{n=1}^{\infty} a_f(n)e(nz). \nonumber
\end{equation}
Finally, the Petersson norm of $f$ is defined by 
\begin{equation}
	\Vert f\Vert^2 = \frac{1}{[\textrm{SL}_2(\mathbb{Z})\colon \Gamma_0(N)]} \int_{\mathcal{F}_N} \vert  f(x+iy)\vert^2 y^{{\kappa}-2} dxdy. \nonumber
\end{equation}
Here $\mathcal{F}_N$ is a fundamental domain for $\Gamma_0(N)$, which we assume to be contained in the strip $$\mathcal{P}=\{ x+iy\in \mathbb{H}\colon 0\leq  x\leq 1\}.$$

We recall the bound
\begin{equation}
	\vert a_f(n)\vert \ll_{\epsilon} d^{\frac{1}{2}}N^{\frac{1}{2}+\epsilon}n^{\frac{\kappa-1}{2}+\epsilon}\Vert f\Vert \label{eq:schulze-pillot}
\end{equation}
from \cite[Theorem 12]{SP}. This bound holds for general $f$. It is a natural question to ask for suitable estimate for the the Petersson norm $\Vert f\Vert$ in terms of the Fourier coefficients of $f$ (at infinity). For small levels this has been considered in \cite{Jenkins-Rouse2011, Jenkins_Pratt2015, Choi-Im-2018}. In Theorem~\ref{theorem} below we give a general answer.


\begin{rmk}
A similar problem occurs in the classical literature concerning quadratic forms. Here one naturally encounters the problem of uniformly estimating the Petersson norm of the cuspidal part $f$ of certain theta functions. See \cite[Section~3]{waibel2022} and the references within for further discussion.

The standard approach is to cover the fundamental domain $\mathcal{F}_N$ by a suitable collection of Siegel sets. Executing the integrals over these sets yields an upper bound for the Petersson norm of $f$ in terms of sums of Fourier coefficients. Typically this estimate involves the Fourier coefficients of $f$ at all cusps. In the case where $f$ arises from a theta function, one has sufficient control on all Fourier coefficients to obtain the desired estimate for the Petersson norm of $f$.

However, for general $f\in S_{\kappa}(N)$, the Fourier coefficients at cusps different from $\infty$ can be hard to access. To circumvent this issue we modify the approach described above by using different Siegel sets.
\end{rmk}

We introduce the sup-norm
\begin{equation}
	\Vert f\Vert_{\infty} = \sup_{z\in \mathbb{H}} \Im(z)^{\frac{\kappa}{2}} \vert f(z)\vert.\nonumber
\end{equation}
For a general form $f\in S_{\kappa}(N)$ one has the bound 
\begin{equation}
	\Vert f\Vert_{\infty}^2 \ll [\textrm{SL}_2(\mathbb{Z})\colon \Gamma_0(N)]\cdot \kappa^{\frac{3}{2}} \cdot \Vert f\Vert^2.  \label{eq:sup_norm}
\end{equation}
This is a direct consequence of general bounds for the Bergman kernel as given in \cite{kramer} for example.

\begin{remark}
The value of the estimates \eqref{eq:schulze-pillot} and \eqref{eq:sup_norm} lies in their generality. Indeed, they do not require any assumption on the form $f\in S_{\kappa}(N)$. The price paid for this is that they are not sharp in general. 
\end{remark}

Let $Y>0$ be a small parameter.  We define the Siegel set
\begin{equation}
	\mathcal{P}_Y=\{ x+iy\in \mathbb{H}\colon 0\leq  x\leq 1 \text{ and }y>Y\}. \nonumber
\end{equation}
A standard computation shows that
\begin{equation}
	I_f(Y) 
	= \sum_{n=1}^{\infty}\vert a_f(n)\vert^2\int_Y^{\infty}e^{-4\pi ny}y^{\kappa-2}dy. \nonumber\\
\end{equation}

The remaining $y$-integral decays exponentially for sufficiently large $n$. This allows us to truncate the $n$-sum as follows. 

\begin{lemma}
Let $f\in S_{\kappa}(N)$ arbitrary. For $\epsilon,\delta>0$ there is a (small) constant $C=C(\epsilon,\delta,\kappa)>0$ so that for $Y\leq C$ we have
\begin{equation}
	I_f(Y) \ll_{\kappa,\epsilon} S_f(Y^{-1-\delta}) +  YN^{2+\epsilon}\Vert f\Vert^2,\label{eq:IfY}
\end{equation}
where $$S_f(X)=\sum_{n\leq X} \vert a_f(n)\vert^2n^{1-\kappa}.$$
\end{lemma}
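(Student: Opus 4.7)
The plan is to split the sum defining $I_f(Y)$ at the threshold $X := Y^{-1-\delta}$. For $n \le X$ I would bound the inner integral $\int_Y^\infty e^{-4\pi ny}y^{\kappa-2}\,dy$ by its completion $\int_0^\infty e^{-4\pi ny}y^{\kappa-2}\,dy = \Gamma(\kappa-1)(4\pi n)^{1-\kappa}$; summing against $|a_f(n)|^2$ then produces the head term $\ll_\kappa S_f(X)$, matching the first term of \eqref{eq:IfY}. For $n > X$ we have $nY > Y^{-\delta}$, which makes the inner integral super-polynomially small in $Y^{-\delta}$, and combining this with a pointwise coefficient bound should absorb into the error $YN^{2+\epsilon}\Vert f\Vert^2$.

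To make the tail analysis precise, I would write the inner integral as the upper incomplete Gamma function $(4\pi n)^{1-\kappa}\Gamma(\kappa-1, 4\pi nY)$ and invoke the standard asymptotic $\Gamma(\kappa-1,x) \ll_\kappa x^{\kappa-2}e^{-x}$ for large $x$. This is valid once $Y \leq C(\kappa,\delta)$ is small enough that $4\pi nY \geq 4\pi Y^{-\delta}$ is large for every $n > X$, giving the integral pointwise $\ll_\kappa Y^{\kappa-2}n^{-1}e^{-4\pi nY}$. Inserting the pointwise Fourier coefficient bound $|a_f(n)|^2 \ll_{\kappa,\epsilon} N^{2+\epsilon}n^{\kappa-1+\epsilon}\Vert f\Vert^2$, which follows from \eqref{eq:schulze-pillot} combined with \eqref{eq:dimension} (absorbing the $\log\log N$ from the dimension estimate into $N^\epsilon$), the tail reduces to $Y^{\kappa-2}N^{2+\epsilon}\Vert f\Vert^2$ times $\sum_{n > X} n^{\kappa-2+\epsilon}e^{-4\pi nY}$. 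Comparing the latter sum to $\int_X^\infty u^{\kappa-2+\epsilon}e^{-4\pi uY}\,du$ and changing variables $v = 4\pi uY$ produces yet another incomplete Gamma integral with threshold $4\pi Y^{-\delta}$, and the same asymptotic shows it is super-polynomially small in $Y^{-\delta}$.

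The main obstacle is reconciling the polynomial-in-$Y^{-1}$ losses (the factor $Y^{\kappa-2}$ on the integral, together with the powers of $Y^{-1}$ introduced by the change of variables) with the single linear factor $Y$ permitted in the error term. The resolution is that the exponential factor $e^{-4\pi Y^{-\delta}}$ dominates any fixed power of $Y^{-1}$ once $C(\kappa,\epsilon,\delta)$ is chosen sufficiently small, so the entire tail is in fact $\ll Y^A N^{2+\epsilon}\Vert f\Vert^2$ for any fixed $A > 0$. This comfortably fits inside the stated error $YN^{2+\epsilon}\Vert f\Vert^2$, and combining with the head estimate yields \eqref{eq:IfY}.
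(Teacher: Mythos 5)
Your proposal is correct and follows essentially the same route as the paper: split the $n$-sum at $Y^{-1-\delta}$, bound the head by completing the $y$-integral to give $S_f(Y^{-1-\delta})$, and control the tail with the pointwise bound \eqref{eq:schulze-pillot} together with the dimension estimate \eqref{eq:dimension}, letting the exponential decay of the incomplete-Gamma tail beat every polynomial loss in $Y^{-1}$ once $Y\leq C(\epsilon,\delta,\kappa)$. The only (immaterial) difference is that you estimate each incomplete Gamma pointwise and then compare the resulting sum to an integral, whereas the paper exchanges the sum and the integral before performing a single such estimate via partial integration.
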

\begin{proof}

For a large parameter $K>0$ we use \eqref{eq:schulze-pillot} to estimate
\begin{align}
	&\sum_{n>K}\vert a_f(n)\vert^2 \cdot \int_Y^{\infty}e^{-4\pi ny}y^{\kappa-2}dy \nonumber\\
    &\qquad\ll_{\epsilon} dN^{1+\epsilon}\Vert f\Vert^2 \sum_{n>K} n^{\epsilon}\int_{Y}^{\infty}e^{-4\pi ny}(ny)^{\kappa-1}\frac{dy}{y} \nonumber\\
	&\qquad\ll_{\epsilon} dN^{1+\epsilon}\Vert f\Vert^2 \sum_{n>K} n^{\epsilon}\int_{nY}^{\infty}e^{-4\pi y}y^{\kappa-2}dy \nonumber\\
	&\qquad\ll_{\epsilon}  dN^{1+\epsilon}\Vert f\Vert^2 \int_{KY}^{\infty}e^{-4\pi y}y^{\kappa-2}\sum_{K\leq n<\frac{y}{Y}}n^{\epsilon}dy \nonumber \\
	&\qquad\ll_{\epsilon}  \frac{dN^{1+\epsilon}}{Y^{1+\epsilon}}\Vert f\Vert^2 \int_{KY}^{\infty}e^{-4\pi y}y^{\kappa-1+\epsilon}dy \nonumber\\
	&\qquad \ll_{\epsilon}N^{2+\epsilon}\kappa^{1+\epsilon}\Vert f\Vert^2\cdot \frac{e^{-(4\pi-1)KY}}{Y^{1+\epsilon}}\cdot \int_{KY}^{\infty}e^{-y}y^{\kappa-1+\epsilon}dy.\nonumber
\end{align}
If we assume that $YK>1$, which is a reasonable assumption, then we can estimate the remaining integral using repeated partial integration. We end up with
\begin{equation}
	\sum_{n>K}\vert a_f(n)\vert^2 \cdot \int_Y^{\infty}e^{-4\pi ny}y^{\kappa-2}dy \ll_{\epsilon}N^{2+\epsilon}\kappa^{1+\epsilon}\cdot \frac{e^{-4\pi KY}(KY)^{\kappa-1+\epsilon}}{Y^{1+\epsilon}}\cdot \Vert f\Vert^2.\nonumber
\end{equation}

On the other hand we compute
\begin{equation}
	\sum_{n\leq K}\vert a_f(n)\vert^2 \cdot \int_Y^{\infty}e^{-4\pi ny}y^{\kappa-2}dy   \leq\int_{Y}^{\infty}y^{\kappa-2}e^{-4\pi y}dy \sum_{n\leq K} \vert a_f(n)\vert^2n^{1-\kappa} \ll \frac{(\kappa-2)!}{(4\pi)^{\kappa-1}}\cdot S_f(K).\nonumber
\end{equation}
For fixed $\kappa$ we choose $K=Y^{-1-\delta}$ and obtain
\begin{equation}
 I_f(Y) \ll_{\kappa,\epsilon} S_f(Y^{-1-\delta}) + N^{2+\epsilon}\cdot e^{-4\pi Y^{-\delta}}Y^{-\delta(\kappa-1+\epsilon)-1-\epsilon}\cdot \Vert f\Vert^2\nonumber
\end{equation}
Note that for $x\gg_{\epsilon,\delta,\kappa} 1$ we have
\begin{equation}
    e^{-4\pi x}x^{\kappa-1+\epsilon} \leq x^{-\frac{2+\epsilon}{\delta}}. \label{some_equation}
\end{equation}
We take $C=C(\epsilon,\delta,\kappa)>0$ so that for $Y\leq C$ the inequality \eqref{some_equation} holds for $x=Y^{-\delta}$. The claimed statement follows directly.
\end{proof}

\begin{rmk}
In principle one can make the estimate \eqref{eq:IfY} explicit in $\kappa$. Since this is not required for our application, we  do not pursue this here.
\end{rmk}

Next we compute the hyperbolic volume of $\mathcal{F}_N\cap \mathcal{P}^c_Y$. Recall that $Y>0$ is a small parameter, so that this volume should be tiny.

The set of cusps of $\Gamma_0(N)$ can be parametrized by
\begin{equation}
	\mathfrak{C}_N = \{ \frac{u}{v} \colon v\mid N,\,  (u,v)=1 \text{ and } u \text{ mod }(v,\frac{N}{v}) \}.\nonumber
\end{equation} 
Note that $1/N$ is equivalent to the cusp $\infty$, which plays a special role for us. We can choose the fundamental domain $\mathcal{F}_N$, so that it has cuspidal vertices at the representatives in $\mathfrak{C}_N\setminus \{\frac{1}{N}\}$ and $\infty$.

The width of a cusp $\mathfrak{a}=\frac{u}{v}\in \mathfrak{C}_N$ is given by
\begin{equation}
	w_{\mathfrak{a}} = \frac{N}{(N,v^2)}.\nonumber
\end{equation}
Furthermore, we choose a matrix
\begin{equation}
	\sigma_{\mathfrak{a}} =  \left(\begin{matrix} u & \ast \\ v & \ast \end{matrix}  \right)\in \textrm{SL}_2(\mathbb{Z}). \nonumber
\end{equation}
In particular, $\sigma_{\mathfrak{a}}\infty = \mathfrak{a}$ and 	
\begin{equation}
	\sigma_{\mathfrak{a}}^{-1}\Gamma_{\mathfrak{a}}\sigma_{\mathfrak{a}} = \left\{ \pm \left(\begin{matrix} 1 & w_{\mathfrak{a}}m\\0&1\end{matrix}\right)\colon m\in \mathbb{Z}\right\}, \nonumber
\end{equation}
where $\Gamma_{\mathfrak{a}}\subseteq \Gamma_0(N)$ is the stabilizer of the cusp in question.

Let $K$ be a parameter, which we imagine large. We define the horocycle
\begin{equation}
	L(K) = \{ x+iK\colon x\in \mathbb{R}\}. \nonumber
\end{equation}
Given a matrix 
\begin{equation}
	\sigma = \left(\begin{matrix} u & \ast \\ v & \ast\end{matrix}\right)\in \textrm{SL}_2(\mathbb{Z}), \nonumber
\end{equation}
we observe that $\sigma L(K)$ is mapped to the circle tangent to $u/v$ of diameter $(v^2K)^{-1}$. In particular, $\sigma$ maps the set $\mathcal{P}_K$ to the interior of this circle. It is easy compute that
\begin{equation}
	\textrm{Vol}(\sigma\mathcal{P}_K) = \textrm{Vol}(\mathcal{P}_K) = \frac{1}{K}. \nonumber
\end{equation}
Using these observations we can estimate
\begin{align}
	\textrm{Vol}(\mathcal{F}_N\cap \mathcal{P}^c_Y) &\leq \sum_{\frac{1}{N}\neq \mathfrak{a}=u/v\in \mathfrak{C}_N}w_{\mathfrak{a}}\textrm{Vol}(\sigma_{\mathfrak{a}}\overline{\mathcal{P}_{(2v^2Y)^{-1}}}) \nonumber\\
	&= 2Y \sum_{\substack{v\mid N,\\ v\neq N}} \frac{Nv^2}{(v^2,N)}\cdot\varphi((v,\frac{N}{v})).\nonumber 
\end{align}
The remaining sum is multiplicative in $N$ and is easily evaluated on prime powers. Indeed, if we call
\begin{equation}
	\psi(N)=\sum_{\substack{v\mid N}} \frac{Nv^2}{(v^2,N)}\cdot\varphi((v,\frac{N}{v})),\nonumber 
\end{equation}
then we compute
\begin{equation}
	\psi(N) = N^2\cdot \prod_{p\mid N} (1+\frac{1}{p}) \ll_{\epsilon} N^{2+\epsilon}.\nonumber
\end{equation}
We conclude that
\begin{equation}
	\textrm{Vol}(\mathcal{F}_N\cap \mathcal{P}^c_Y) \ll_{\epsilon} N^{2+\epsilon}Y.\label{eq:vol_est_cusps}
\end{equation}

We are now  ready to prove the main result of this section.

\begin{thm}\label{theorem}
Let $f\in S_{\kappa}(N)$ be an arbitrary non-trivial (i.e. $f\neq 0$) form. For a fixed $\epsilon>0$, there are constants $c_1=c_1(\epsilon,\kappa)$ and $c_2(\epsilon,\kappa)$ such that
\begin{equation}
	\Vert f\Vert^2 \leq \frac{c_1}{[\mathrm{SL}_2(\mathbb{Z})\colon \Gamma_0(N)]}\cdot\sum_{n\leq c_2N^{2+\epsilon}}\vert a_f(n)\vert^2n^{1-\kappa}. \nonumber
\end{equation}
Therefore the $m$-th Fourier coefficient $a_f(m)$ is bounded by
\begin{equation}
  \label{eq:afm-bound}
  |a_f(m)| \ll_{\epsilon, \kappa}
N^{1/2 + \epsilon}
\lp  \sum_{n\leq c_2N^{2+\epsilon}}\vert a_f(n)\vert^2n^{1-\kappa}\rp^{1/2}
m^{\frac{\kappa-1}2 + \epsilon}
\end{equation}
for all $m$. 

\end{thm}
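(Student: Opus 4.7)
The plan is to split the defining integral along the horocycle $y=Y$ for a carefully chosen small $Y>0$, since $\mathcal{F}_N \subseteq \mathcal{P}$ gives $\mathcal{F}_N \cap \{y>Y\} \subseteq \mathcal{P}_Y$. Thus
\begin{equation*}
[\mathrm{SL}_2(\mathbb{Z})\colon \Gamma_0(N)]\,\Vert f\Vert^2 \;=\; \int_{\mathcal{F}_N} |f(z)|^2 y^{\kappa-2}\,dxdy \;\leq\; I_f(Y) \;+\; \int_{\mathcal{F}_N \cap \mathcal{P}^c_Y} |f(z)|^2 y^{\kappa-2}\,dxdy.
\end{equation*}
The first term is already handled by \eqref{eq:IfY}, giving a contribution of size $S_f(Y^{-1-\delta}) + YN^{2+\epsilon}\Vert f\Vert^2$. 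For the second term, I use the pointwise estimate $|f(z)|^2 y^{\kappa-2} \leq \Vert f\Vert_{\infty}^2\,y^{-2}$, pull out the sup-norm, and bound the remaining hyperbolic measure integral. Combining \eqref{eq:sup_norm} with the volume estimate \eqref{eq:vol_est_cusps}, this contribution is $\ll_\kappa [\mathrm{SL}_2(\mathbb{Z})\colon \Gamma_0(N)]\,N^{2+\epsilon}Y\,\Vert f\Vert^2$.

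After dividing by the index, both ``error'' terms that are proportional to $\Vert f\Vert^2$ come with coefficient of size at most $\kappa^{O(1)} N^{2+\epsilon}Y$. The plan is then to choose $Y = c\,N^{-2-\epsilon}$ with $c = c(\kappa,\epsilon)$ small enough that this coefficient is bounded by $\tfrac{1}{2}$; these terms can then be absorbed into the LHS. The surviving piece yields
\begin{equation*}
  \Vert f\Vert^2 \;\ll_{\kappa,\epsilon}\; \frac{S_f(Y^{-1-\delta})}{[\mathrm{SL}_2(\mathbb{Z})\colon \Gamma_0(N)]}.
\end{equation*}
By choosing $\delta$ sufficiently small in terms of $\epsilon$, we have $Y^{-1-\delta} \leq c_2\,N^{2+\epsilon}$ after relabeling $\epsilon$, which gives the first assertion. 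The second assertion \eqref{eq:afm-bound} then follows by inserting the resulting bound for $\Vert f\Vert$ into the Schulze-Pillot estimate \eqref{eq:schulze-pillot}, applying the dimension bound \eqref{eq:dimension} to get $d^{1/2} \ll_\kappa N^{1/2+\epsilon}$, and using $[\mathrm{SL}_2(\mathbb{Z})\colon \Gamma_0(N)] \gg N$.

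The main obstacle is the self-referential structure: both the high-$y$ bound \eqref{eq:IfY} (which itself relies on \eqref{eq:schulze-pillot}) and the low-$y$ bound (which relies on the sup-norm estimate \eqref{eq:sup_norm}) produce a tail involving $\Vert f\Vert^2$, so the argument succeeds only if $Y$ can be pushed small enough to absorb both such terms while simultaneously keeping the truncation point $Y^{-1-\delta}$ no larger than $N^{2+\epsilon}$. This forces the specific polynomial relationship $Y \asymp N^{-2-\epsilon}$ and is the reason the truncation length in the statement has exponent $2+\epsilon$ rather than anything sharper.
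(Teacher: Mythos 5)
Your proposal is correct and follows essentially the same route as the paper: split the Petersson integral at $y=Y$, control the high part by the truncated-coefficient lemma \eqref{eq:IfY}, control the low part by the sup-norm bound \eqref{eq:sup_norm} together with the volume estimate \eqref{eq:vol_est_cusps}, absorb the two $\Vert f\Vert^2$-proportional terms by choosing $Y\asymp N^{-2-\epsilon}$, and then feed the resulting norm bound into \eqref{eq:schulze-pillot} with \eqref{eq:dimension}. Your explicit choice $Y=c(\kappa,\epsilon)N^{-2-\epsilon}$ also automatically satisfies the hypothesis $Y\leq C$ of the lemma, so no additional case distinction is needed.
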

\begin{proof}
Let $Y>0$ be a small parameter chosen later. We start with the estimate
\begin{align}
	\Vert f\Vert^2 &= \frac{1}{[\textrm{SL}_2(\mathbb{Z})\colon \Gamma_0(N)]}\int_{\mathcal{F}_N} \vert f(x+iy)\vert^2y^{\kappa-2}dxdy \nonumber\\
	&\leq \frac{1}{[\textrm{SL}_2(\mathbb{Z})\colon \Gamma_0(N)]}\cdot I_f(Y) +  \frac{1}{[\textrm{SL}_2(\mathbb{Z})\colon \Gamma_0(N)]}\int_{\mathcal{F}_N\cap \mathcal{P}^c_Y} \vert f(x+iy)\vert^2y^{\kappa-2}dxdy \nonumber\\
	&\leq \frac{1}{[\textrm{SL}_2(\mathbb{Z})\colon \Gamma_0(N)]}\cdot I_f(Y) +  \frac{\textrm{Vol}(\mathcal{F}_N\cap \mathcal{P}^c_Y)}{[\textrm{SL}_2(\mathbb{Z})\colon \Gamma_0(N)]}\cdot \Vert f\Vert_{\infty}^2.\nonumber
\end{align}

According to \eqref{eq:sup_norm} and \eqref{eq:vol_est_cusps} we obtain
\begin{equation}
	\frac{\textrm{Vol}(\mathcal{F}_N\cap \mathcal{P}^c_Y)}{[\textrm{SL}_2(\mathbb{Z})\colon \Gamma_0(N)]}\cdot \Vert f\Vert_{\infty}^2 \ll_{\kappa,\epsilon} YN^{2+\epsilon}  \Vert f\Vert^2. \nonumber
\end{equation}
Let us call the implicit constant in this bound $C_{2} = C_2(\epsilon,\kappa)$. Similarly we write
\begin{equation}
	\frac{1}{[\textrm{SL}_2(\mathbb{Z})\colon \Gamma_0(N)]}\cdot I_f(Y) \leq \frac{C_1}{[\textrm{SL}_2(\mathbb{Z})\colon \Gamma_0(N)]}S_f(Y^{-1-\epsilon}) + C_1YN^{1+\epsilon}\Vert f\Vert^2,  \nonumber
\end{equation}
for a constant $C_1=C_1(\epsilon,\kappa)$. This leaves us with
\begin{equation}
	\Vert f\Vert^2\leq \frac{C_1}{[\textrm{SL}_2(\mathbb{Z})\colon \Gamma_0(N)]}\cdot \left(1-C_2YN^{2+\epsilon}-C_1YN^{1+\epsilon}\right)^{-1}\cdot \sum_{n\leq Y^{-(1+\epsilon)}}\vert a_f(n)\vert^2n^{1-\kappa}.\nonumber
\end{equation}
Choosing $Y\asymp N^{-2-\epsilon}$ yields
\begin{equation}
	\Vert f\Vert^2 \ll_{\kappa,\epsilon} \frac{1}{[\textrm{SL}_2(\mathbb{Z})\colon \Gamma_0(N)]}\cdot \sum_{n\ll_{\kappa,\epsilon} N^{2+\epsilon}}\vert a_f(n)\vert^2n^{1-\kappa}.\nonumber
\end{equation}
This gives the desired bound on the Petersson norm. Substituting this into \eqref{eq:schulze-pillot} finishes the proof.
\end{proof}

\begin{rmk}
This argument is quite flexible and works for more general congruence subgroups (i.e. $\Gamma_1(N)$) and more general weights (i.e. $\kappa$ half integral).   
\end{rmk}

\begin{rmk}\label{rmk:length}
Considering that the dimension $d$ of $S_{\kappa}(N)$ satisfies $N^{1-\epsilon}\ll_{\kappa,\epsilon} d \ll_{\kappa,\epsilon}N^{1+\epsilon}$, the best bound for the Petersson norm of a general element $f\in S_{\kappa}(N)$ one can expect is of the form 
\begin{equation}
	\Vert f\Vert^2 \leq \frac{c_1}{[\mathrm{SL}_2(\mathbb{Z})\colon \Gamma_0(N)]}\cdot\sum_{n\leq c_2N^{1+\epsilon}}\vert a_f(n)\vert^2n^{1-\kappa}. \nonumber
\end{equation}
However, such a bound seems to be out of reach of our method.
\end{rmk}

\section{Proof of Theorem \ref{thm:main}}
\label{sec:pf-thm}
In this section, we will prove Theorem \ref{thm:main}.
To do this, we record two lemmas concerning the sizes of the Fourier coefficients of $g_{k, \pf, r}$ defined in \eqref{eq:gkr}.

\begin{lemma}
  \label{lemma:bd1}
  Let $g_{k, p, r} \in S_{2k}(N)$ be the modular form defined in \eqref{eq:gkr} with $k > 1, N = p^{1+2r}, r \ge 0$ and $p \in\Nb$ a prime.
  Then
  \begin{equation}
    \label{eq:bd1}
    |c_k(m; N)| \ll_{k, \epsilon}
    \frac{m^{k-\frac12+\epsilon}}{N\sqrt{D}}
    \min\lp
m^{1/2}, N^{5/2+\epsilon}
    \rp
\end{equation}
  for any $m \ge 1$ and $\epsilon > 0$.
  In particular, the implied constant is independent of $N$ and $m$.
\end{lemma}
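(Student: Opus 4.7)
The plan is to establish the two branches of the minimum by independent methods and then combine them.

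\textbf{First estimate (the $m^{1/2}$ branch).} Starting from the explicit formula \eqref{eq:ckr} together with the identity $c(t, \phi_{W^{(\pf)}}(\pf^r)) = \rho_{K/F}(t\sqrt{D}/\pf^{2r+1})$ from \eqref{eq:ctr}, I would bound $|c_k(m; N)|$ termwise using three ingredients. First, since $t + t' = m$ with $t, t' > 0$, the argument $(t-t')/m$ lies in $[-1, 1]$, so the Legendre polynomial factor is $O_k(1)$. Second, the relative ideal-counting function is majorised by the divisor function, giving $\rho_{K/F}(\cdot) \ll_\epsilon m^\epsilon$. Third, parametrising $t = \tfrac{m}{2} + \tfrac{2a+m}{2\sqrt{D}} \in \df^{-1}$ with $\tr(t) = m, t \gg 0$ produces a set of size $O(m\sqrt{D})$ (indexed by $a$ with $|2a+m| < m\sqrt{D}$), and the necessary divisibility $\pf^{2r+1} \mid t\sqrt{D}$ for non-vanishing of $\rho_{K/F}$ cuts this down by a factor of $N$. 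Multiplying these ingredients and the outer factor $m^{k-1}$ yields a bound of the shape $\frac{m^{k+\epsilon}}{N\sqrt{D}}$, which matches the first branch after rewriting as $\frac{m^{k-1/2+\epsilon}}{N\sqrt{D}} \cdot m^{1/2}$.

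\textbf{Second estimate (the $N^{5/2+\epsilon}$ branch).} Since $k > 1$, Corollary \ref{cor:1} tells us $g_{k, p, r} \in S_{2k}(N)$ is a cusp form. I then apply \eqref{eq:afm-bound} with $\kappa = 2k$ to $g_{k, p, r}$ to get
\[
|c_k(m; N)| \ll_{k, \epsilon} N^{1/2+\epsilon}\, m^{k - \frac12 + \epsilon}\, \Bigl( \sum_{n \le c_2 N^{2+\epsilon}} |c_k(n; N)|^2 n^{1-2k} \Bigr)^{\!1/2}.
\]
Inserting the first estimate into the $\ell^2$-sum, after the cancellation $n^{2k+\epsilon} \cdot n^{1-2k} = n^{1+\epsilon}$ it becomes
\[
\sum_{n \le c_2 N^{2+\epsilon}} \frac{n^{2k + \epsilon}}{N^2 D}\, n^{1 - 2k} \ll \frac{1}{N^2 D} \cdot N^{4+\epsilon} = \frac{N^{2+\epsilon}}{D}.
\]
Extracting the square root and multiplying back gives $\frac{m^{k-1/2+\epsilon} N^{3/2+\epsilon}}{\sqrt{D}}$, which is exactly the second branch.

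\textbf{Main obstacle.} The delicate point is the counting step in the first estimate. One must distinguish the case where $p$ is inert in $F$ (then $\pf = p\Oc_F$, $\Nm(\pf) = p^2$, and $\pf^{2r+1} \mid t\sqrt{D}$ forces congruence conditions on both coordinates of $t \in \df^{-1}$, in particular $N \mid m$) from the case where $p$ splits in $F$ (then $\Nm(\pf) = p$ and the divisibility becomes a single linear congruence modulo $p^{2r+1} = N$). In both cases the explicit parametrisation of $\df^{-1}$ reduces the count of admissible $t$ to $O(m\sqrt{D}/N \cdot N^\epsilon)$, which is precisely what is needed for the first branch. Once both estimates are in hand, taking the minimum completes the proof.
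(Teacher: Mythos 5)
Your argument is essentially the paper's own proof: the first branch is obtained exactly as in the paper from the explicit formula \eqref{eq:ckr} together with \eqref{eq:ctr}, bounding the Legendre polynomial by $O_k(1)$, the counting function $\rho_{K/F}$ by a divisor-type bound, and counting the admissible $t$ via a congruence condition, while the second branch comes from inserting that estimate into the Petersson-norm bound \eqref{eq:afm-bound} with $\kappa=2k$, yielding $N^{1/2+\epsilon}\cdot N^{1+\epsilon}D^{-1/2}\cdot m^{k-\frac12+\epsilon}$ just as in the paper. The only small bookkeeping deviations are harmless: the divisibility $t\sqrt{D}\in\pf^{2r+1}$ constrains the integer parameter modulo $ND$ (the paper uses $\pf^{2r+1}\sqrt{D}\cap\Zb=p^{2r+1}D\Zb$), not merely modulo $N$, which is what produces the stated factor $\frac{1}{N\sqrt{D}}$ rather than $\frac{\sqrt{D}}{N}$; and the case you single out where $p$ is inert in $F$ never contributes, since then no prime of $F$ above $p$ is non-split in the unramified extension $K/F$ and $g_{k,p,r}$ vanishes identically, so one may always take $\Nm(\pf)=p$ as the paper does.
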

\begin{proof}
  We only need to consider those primes $p$ such that there exists a prime $\pf \mid p$ of $F$ that is inert in $K$, otherwise $g_{k, p, r}$ is trivial.
  In that case any prime $\tilde\pf \mid p$ will be non-split in $K$.
For such $p$, putting together \eqref{eq:ctr} and \eqref{eq:ckr} then gives us
  $$
      c_k(n; p^{2r+1}) =      n^{k-1}
      \sum_{\pf \mid p}
      \sum_{a \in 2\Zb + nD, |a| < n\sqrt{D}}
      P_{k-1}\lp \frac{a}{n\sqrt{D}} \rp        \rho_{K/F}\lp\frac{n\sqrt{D} + a}2/\pf^{2r+1}\rp,      
      $$
      where we have written $t = \frac{n \sqrt{D} + a}{2\sqrt{D}}$. 
      For totally positive
      $t_1, t_2 \in \pf^{2r+1}$ with
      $t_i = \frac{m \sqrt{D} + a_i}{2\sqrt{D}} \in \df^{-1}$, we have      $a_1 - a_2 \in \pf^{2r+1}\sqrt{D} \cap \Zb = p^{2r+1}D \Zb$.
      So for each $r$, any non-zero $a$ appearing in the summand is bounded above by $n/(p^{2r+1}\sqrt{D})$. Moreover, when $|a|<n\sqrt{D}$, we have $      P_{k-1}\lp \frac{a}{n\sqrt{D}} \rp < 1$. Finally, the ideal counting function $\rho_{K/F}(\af)$ is bounded by $\log \Nm(\af)$ for any integral ideal $\af$. Putting these together gives us
      \begin{equation}
        \label{eq:gkrn-bd1}
|c_k(n; p^{2r+1}) | \ll \frac{n^{k}\log n}{p^{2r+1}\sqrt{D}}.         
      \end{equation}
      Putting this into \eqref{eq:afm-bound} gives us
        $n^{1-\kappa}$ in the sum 
      \begin{equation}
        \label{eq:gkrn-bd2}
        |c_k(m; p^{2r+1}) | \ll_{\epsilon}
        N^{3/2+\epsilon} D^{-1/2} m^{k-1/2 + \epsilon}. 
      \end{equation}
    \end{proof}

    \begin{proof}[Proof of Theorem \ref{thm:main}]
      By equidistribution of Hecke correspondence on product of modular curves \cite[Theorem~1.1]{COU01} and negativity of higher Green function $G_k$, we have
      $$
|G^m_k(z_1, z_2)| \gg_{z_1, z_2, k} m
      $$
as $m \to \infty$. 
Applying this to \eqref{eq:Gm-est} with $k = 3, 5, 7$ and combining with Remarks \ref{rmk:r-bound} and Corollary \ref{cor:1}  gives us
\begin{equation}
  \label{eq:lower-bound}
  \sum_{ \substack{p, r\\p^{2r+1} \le \frac{m^2}4}}
  \log p
\cdot  |c_{k}(m; p^{2r+1}) |
=  \sum_{ \substack{p\in\pi(m), r\\p^{2r+1} \le \frac{m^2}4}}
  \log p
\cdot  |c_{k}(m; p^{2r+1}) |
\gg_{D_1, D_2} m^{k}.
\end{equation}
The size of the summation above is then bounded by $|\pi(m)| \cdot \log m$. 
Denote $\delta = 1/5$. 
On the one hand, the second bound in \eqref{eq:bd1} gives us
  \begin{align*}
  \sum_{ \substack{p\in \pi(m), r\\p^{2r+1} \le m^\delta}}
    \log p
    \cdot  |c_{k}(m; p^{2r+1}) |
    & \ll_\epsilon
      |\pi(m)|\cdot \log m \cdot
      m^{k-1/2 + \epsilon} (m^\delta)^{3/2 + \epsilon}/\sqrt{D}.
  \end{align*}
  On the other hand, the first bound in \eqref{eq:bd1} yields
  \begin{align*}
  \sum_{ \substack{p\in \pi(m), r\\ m^\delta < p^{2r+1} \le m^2/4}} \log p
    \cdot  |c_{k}(m; p^{2r+1}) |
    & \ll_\epsilon
      |\pi(m)| \cdot \frac{m^{k+\epsilon}}{m^\delta\sqrt{D}}.
  \end{align*}
  
  Adding them together and putting it into \eqref{eq:lower-bound}
  completes the proof.
    \end{proof}

\begin{rmk}\label{rmk:limit_of_method}
The quality of the lower bound in Theorem~\ref{thm:main} directly depends on the strength of the upper bound in \eqref{eq:bd1}. Indeed, running our argument with a general bound of the form
\begin{equation}
    |c_k(m; N)| \ll_{k, \epsilon}
    \frac{m^{k-\frac12+\epsilon}}{N\sqrt{D}}
    \min\lp
m^{1/2}, N^{\alpha+\epsilon}
    \rp \text{ for }\alpha\geq 1, \nonumber
\end{equation}
would yield
\begin{equation}
    \# \pi(m)\gg_{E_1,E_2,\epsilon} m^{\frac{1}{2\alpha}-\epsilon}.\nonumber
\end{equation}
In particular, the bound stated in Remark~\ref{rmk:length} would allow us to use $\alpha=\frac{3}{2}$ and thus produce the lower bound $\#\pi(m)\gg_{E_1,E_2,\epsilon} m^{\frac{1}{3}-\epsilon}$. We conclude that, using our method, any improvement upon the exponent $\frac{1}{3}$ would rely on a finer understanding of the Petersson norms of the cusp forms $g_{k,p,r}$.
\end{rmk}

\section{Relation with minimal isogeny degree bound}\label{sec: minimal isogeny degree}

We keep the notation from the introduction. Recall that for each $i = 1, 2$,  $E_i/H$ is a CM elliptic curve with CM by different orders $\Oc_{D_i}$. According to our discussion in the introduction and by Deuring's criterion, there is a density 1/4 subset of the rational primes $p$ satisfying that, for any $\pf$ in $H$ above $p$ that is also a prime of good reduction, one has $E_{1, \pf}\sim E_{2, \pf}$ over $\overline{\F}_{p}$.   For any such prime $\pf$ in $H$,   we define 
\[
m_{D_1, D_2}(\pf) := \min\{\deg \phi: \phi: E_{1, \pf}\to E_{2, \pf} \text{ is a non-isomorphic  isogeny over $\overline{\F}_{p}$}\},
\]
which represents the minimal isogeny degree between $E_{1, \pf}$ and $E_{2, \pf}$. Note that $m_{D_1, D_2}(\pf)$ is independent of the choice of $\pf$, so we can also write it as $m_{D_1, D_2}(p)$. For primes $\mathfrak{p}$ where no isogeny exists between $E_{1, \pf}$ and $E_{2, \pf}$, we set $m_{D_1, D_2}(p) = 0$.

It is known that for supersingular elliptic curves, the minimal cyclic isogeny degree is at most $\lfloor \frac{1}{2} p^{2/3} + \frac{1}{4} \rfloor$ \cite[Section 4]{Elkies1987}, and the exponent $2/3$ is optimal \cite[Proposition 1.4, p. 1319]{Yang2008}. From the discussion before Theorem~\ref{thm:main}, for each prime $\mathfrak{p}$ in $H$, if $E_{1, \mathfrak{p}}$ and $E_{2, \mathfrak{p}}$ are isogenous, then they must be supersingular elliptic curves. In particular, the minimal isogeny bound applies to $E_{1, \mathfrak{p}}$ and $E_{2, \mathfrak{p}}$. The following result shows that either we can find better upper bounds of $m_{D_1, D_2}(p)$ for plenty of  primes $p$, or better upper bounds of $\#\pi(m)$ for almost all integers $m$.

\begin{prop}\label{prop:application-2}
Let $D_1$ and $D_2$ be coprime fundamental discriminants  and set $D:=D_1D_2$.
Let $x$  be a positive real number. For any $C>0$, $0<\delta<\frac{2}{3}$, and  $0<\eta<\frac{1}{2}$, as $x\to \infty$, we have either
\begin{enumerate}
    \item $\#\{p\leq x: m_{D_1, D_2}(p)\leq p^{2/3-\delta}\} \gg_{E_1, E_2} x^{1/3+\delta}$ or
    \item  $\#\{m\leq x: \#\pi(m) \geq C m^{1-\eta}\}\ll_{E_1, E_2, \eta, C} x^{\frac{5/6+\delta}{1-\eta}}$.
\end{enumerate}
In particular, for any sufficiently small $\epsilon>0$, we have either
\begin{enumerate}
    \item $\#\{p\leq \frac{x^2D}{2}: m_{D_1, D_2}(p)\leq p^{2/3-\epsilon}\} \gg_{E_1, E_2, \epsilon} x^{2/3+2\epsilon}$ or
    \item  $\#\{m\leq x: \#\pi(m) \geq C m^{\frac{5}{6}+2\epsilon}\}\ll_{E_1, E_2, \epsilon, C} x^{1-\epsilon}$.
\end{enumerate}

\end{prop}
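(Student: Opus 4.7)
My plan is to prove the first (general) dichotomy by a double-counting argument on the pair set
\[
\mathcal{P}(x) := \{(m, p) : m \leq x,\ p \in \pi(m)\},
\]
argued by contradiction: assume that both (1) and (2) fail. The second dichotomy then follows by specializing to $\delta = \epsilon$, $\eta = \tfrac{1}{6} - 2\epsilon$, substituting $x \mapsto x^2D/2$, and absorbing minor slackness (e.g.\ the ratio $(5/6+\epsilon)/(5/6+2\epsilon)$) into a slightly smaller $\epsilon > 0$.

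The three key inputs are: (a) if $p \in \pi(m)$ then $m_{D_1, D_2}(p) \leq m$, since any minimum-degree isogeny between two supersingular elliptic curves is primitive and therefore cyclic; (b) $p \leq Dm^2/4$ by Remark~\ref{rmk:r-bound}; and (c) for fixed supersingular $(E_{1,\pf}, E_{2,\pf})$, the $\Zb$-module $\mathrm{Hom}(E_{1,\pf}, E_{2,\pf})$ is a positive definite lattice of rank $4$ with discriminant $\asymp p^2$, yielding the multiplicity bound $|\mathcal{D}(p) \cap [1, X]| \ll X^2/p + 1$, where $\mathcal{D}(p) := \{m : p \in \pi(m)\}$ is the set of cyclic isogeny degrees between the reductions.

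The argument proceeds in three steps. First, from the failure of (2) a dyadic pigeonhole selects some $M \leq x$ and a set $B \subseteq [M, 2M]$ with $|B| \gg M^{(5/6+\delta)/(1-\eta)}/\log x$ and $\#\pi(m) \gg M^{1-\eta}$ for $m \in B$, producing $\gg M^\sigma$ pairs in $\mathcal{P}(x)$ with $m \in [M, 2M]$, where $\sigma := (5/6+\delta)/(1-\eta) + (1-\eta)$. Second, I classify each pair as \emph{good} (if $m \leq p^{2/3-\delta}$, equivalently $p \geq m^{3/(2-3\delta)}$) or \emph{bad}; each good pair witnesses a prime contributing to the set counted by (1). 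Under (1) failing one has $G(y) := \#\{p \leq y : m_{D_1, D_2}(p) \leq p^{2/3-\delta}\} \ll y^{1/3+\delta}$, and partial summation combined with (c) bounds the total good-pair contribution to $\mathcal{P}(x)$ by $\ll x^{4/3-2\delta}$. Third, the bad-pair contribution over $m \in [M, 2M]$ is controlled via (c) by $\sum_{p \leq DM^2} (M^2/p + 1) \ll M^2 \log\log M$. Comparing, the lower bound $M^\sigma$ must be absorbed by $O(x^{4/3-2\delta}) + O(M^2 \log\log M)$, producing the required contradiction.

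The main obstacle is the delicate exponent balancing, particularly in the regime of small $\eta$. For $\eta > \sqrt{1/6 - \delta}$ one has $\sigma > 2$, and the bad-pair bound $\ll M^2 \log\log M$ immediately forces the contradiction. However, for small $\eta$ (including the value $\eta = \tfrac{1}{6} - 2\epsilon$ relevant to the second dichotomy), $\sigma$ drops below $2$ and the straightforward upper-lower comparison fails. Closing this regime requires a more refined simultaneous dyadic analysis in both $m$ and $p$: one exploits the sharp sparsity $G(y) \ll y^{1/3+\delta}$ from (1) failing to tighten the effective multiplicity of good primes, and carefully accounts for pairs with $m$ near the transition $p^{2/3-\delta}$, separating the contribution of primes $p$ with $p \asymp m^{3/(2-3\delta)}$ from the bulk. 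Executing this refined balancing so that the exponent $1/3 + \delta$ in (1) matches $(5/6+\delta)/(1-\eta)$ in (2) under the geometric constraint $p \leq Dm^2/4$ is the central combinatorial difficulty.
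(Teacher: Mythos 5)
Your overall strategy---double counting the pairs $(m,p)$ with $m\le x$, $p\in\pi(m)$, and playing the negation of (1) against the negation of (2)---has the same skeleton as the paper's argument, but the proof does not close, and you acknowledge this yourself: for small $\eta$, in particular for $\eta=\tfrac16-2\epsilon$, which is exactly the case needed for the second dichotomy, your comparison of $M^{\sigma}$ against $O(x^{4/3-2\delta})+O(M^{2}\log\log M)$ produces no contradiction, and the ``more refined simultaneous dyadic analysis'' you invoke to rescue it is never carried out. This is a genuine gap rather than a technicality, and its source is your multiplicity input (c). Bounding $\#\{m\le X\colon p\in\pi(m)\}$ by the number of lattice points of norm $\le X$ in the rank-$4$ lattice $\Hom(E_{1,\pf},E_{2,\pf})$ of discriminant $\asymp p^{2}$ gives only about $X^{2}/p$ (and even that is not quite the correct general shape, since the successive minima produce intermediate terms of the form $\prod_i(1+\sqrt{X/\lambda_i})$); this is far too weak when $p$ is much smaller than $x^{2}$, which is why the ``bad'' pairs alone can number $\asymp M^{2}$ and swamp your lower bound whenever $\sigma<2$.

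The missing idea is a per-prime count expressed in terms of the minimal isogeny degree rather than the discriminant of the lattice: for fixed $p$, the number of $m\le x$ occurring as cyclic isogeny degrees between $E_{1,\pf}$ and $E_{2,\pf}$ is $\ll x/m_{D_1,D_2}(p)$. The paper derives this from the Deuring correspondence (isogenies correspond to elements of a fixed left ideal class of the quaternion order $\End(E_{1,\pf})$, with degrees given by reduced norms), and with this input no contradiction argument or dyadic pigeonhole is needed: one writes $\sum_{m\le x}\#\pi(m)=\sum_{p\le x^{2}D/2}\#\{m\le x\colon p\in\pi(m)\}\ll x\sum_{p\le x^{2}D/2} m_{D_1,D_2}(p)^{-1}$ (over the relevant primes), splits the primes according to whether $m_{D_1,D_2}(p)\le p^{2/3-\delta}$ or $p^{2/3-\delta}\le m_{D_1,D_2}(p)\ll p^{2/3}$ (Elkies' bound), so that if (1) fails the total is $\ll x^{5/3+2\delta}$, and then compares with $\sum_{m\le x}\#\pi(m)\gg M_{\eta}(x)^{2-2\eta}$, where $M_{\eta}(x)=\#\{m\le x\colon\#\pi(m)\ge Cm^{1-\eta}\}$; this gives (2) directly, and the specialization $\delta=\epsilon$, $\eta=\tfrac16-2\epsilon$ yields the second dichotomy. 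Your inputs (a) and (b) are correct and are also used in the paper, but without the $x/m_{D_1,D_2}(p)$ bound (or a substitute of comparable strength) the exponent $\tfrac{5}{6}+\delta$ in (2) is out of reach of your lattice-point count.
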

\begin{proof}
First, we observe that
\begin{align}\label{cor:lower-bound}
\sum_{m \leq x}\#\pi(m)= \sum_{m \leq x}\#\{p\leq \frac{m^2D}{2}: \exists \;  \pf \text{ of $H$ such that } \pf\mid p, \mathfrak{p}\nmid N_{E_1}N_{E_2}, E_{1, \mathfrak{p}} \sim_m E_{2, \mathfrak{p}} \}, 
\end{align}
where $N_{E_1}$ and $N_{E_2}$ are conductors of $E_1$ and $E_2$, respectively.

On the other hand, by Deuring's correspondence, the equivalence class of a degree $m$ isogeny in $\Hom(E_{1, \mathfrak{p}}, E_{2, \mathfrak{p}})$ corresponds to a left ideal class in the endomorphism ring $\End_{\overline{\F}{p}}(E_{1, \mathfrak{p}})$ whose reduced norm is $m$. Therefore, the degree of any isogeny in $\Hom(E_{1, \frak{p}} , E_{2, \frak{p}})$  is of the form  $k m'$ for some $k\in \mathbb{N}$ and $m'\geq m_{D_1, D_2}(p)$. Hence, for any fixed $0< \delta< 2/3$, we have
\begin{align}\label{eq:upper-bound}
&  \sum_{m \leq x}\#\{p\leq \frac{m^2D}{2}: \exists \;  \pf \text{ of $H$ such that } \pf\mid p, \frak{p}\nmid N_{E_1}N_{E_2}, E_{1, \frak{p}} \sim_m E_{2, \frak{p}} \}\\
& = \sum_{p\leq \frac{x^2D}{2}}\#\left\{\sqrt{\frac{2p}{D}}\leq m \leq x:  \exists \;  \pf \text{ of $H$ such that } \pf\mid p,  \frak{p}\nmid N_{E_1}N_{E_2},   E_{1, \frak{p}} \sim_m  E_{2, \frak{p}}\right\} \nonumber \\
 & \ll_{E_1, E_2}  \sum_{p\leq \frac{x^2D}{2}}\frac{x}{m_{D_1, D_2}(p)} \nonumber \\
 &  \ll_{E_1, E_2} \sum_{\substack{p\leq \frac{x^2D}{2}\\ p^{2/3-\delta}\leq m_{D_1, D_2}(p)\leq p^{2/3}}} \frac{x}{m_{D_1, D_2}(p)}+ \sum_{\substack{p\leq \frac{x^2D}{2}\\ m_{D_1, D_2}(p)\leq p^{2/3-\delta}}} \frac{x}{m_{D_1, D_2}(p)} \nonumber \\
 &  \ll_{E_1, E_2} \sum_{\substack{p\leq \frac{x^2D}{2}\\ p^{2/3-\delta}\leq m_{D_1, D_2}(p)\leq p^{2/3}}} \frac{x}{p^{2/3-\delta}}+ \sum_{\substack{p\leq \frac{x^2D}{2}\\ m_{D_1, D_2}(p)\leq p^{2/3-\delta}}} x \nonumber\\
  &  \ll_{E_1, E_2} x^{5/3+2\delta}+ x \cdot \#\{p\leq \frac{x^2D}{2}: m_{D_1, D_2}(p)\leq p^{2/3-\delta}\}  \nonumber
\end{align}
We assume $ \#\{p\leq \frac{x^2D}{2}: m_{D_1, D_2}(p)\leq p^{2/3-\delta}\} \ll_{E_1, E_2} x^{2/3+2\delta}$.
For any  $0<\eta<1/2$ and $C>0$ set \[M_{\eta}(x):=\#\{m\leq x: \#\pi(m) \geq C m^{1-\eta}\}
\] 
Combining with (\ref{cor:lower-bound}),  we obtain 
\begin{align*}
x^{5/3+2\delta} \gg_{E_1, E_2} \sum_{m\leq x}\#\pi(m)
= & \sum_{\substack{m\leq x\\ \#\pi(m)\geq C m^{1-\eta}}}\#\pi(m)+ \sum_{\substack{m\leq x\\ \#\pi(m)< C m^{1-\eta}}}\#\pi(m) \\
   \gg_{E_1, E_2} & \sum_{m\leq M_{\eta}(x)}  C m^{1-\eta}\gg_{E_1, E_2} M_{\eta}(x)^{2-2\eta}.
\end{align*}
Hence, 
\[
\#\{m\leq x: \#\pi(m) \geq C m^{1-\eta}\}\ll_{E_1, E_2, \eta, C} x^{\frac{5/6+\delta}{1-\eta}}.
\]
The first statement follows.

Now, for any sufficiently small $\epsilon>0$, taking $\delta=\epsilon$, $\eta=1/6-2\epsilon$, 
as $x\to \infty$, 
we get either 
\[\#\{p\leq \frac{x^2D}{2}: m_{D_1, D_2}(p)\leq p^{2/3-\epsilon}\} \gg_{E_1, E_2, \epsilon} x^{2/3+2\epsilon}
\]
or 
\[
\#\{m\leq x: \#\pi(m) \geq C m^{\frac{5}{6}+2\epsilon}\}\ll_{E_1, E_2, \epsilon, C} x^{1-\epsilon}.
\]
The second result follows as well.

\end{proof}

\printbibliography

\end{document}